\documentclass[12pt,twoside, final]{amsart}

\usepackage{amsmath,amsthm,amscd,amsfonts,amssymb,enumerate}
\usepackage{graphicx}
\usepackage{color}
\usepackage[colorlinks]{hyperref}
\usepackage{amsfonts,amssymb,amscd,amsmath,enumerate,url,verbatim}
 \usepackage[dvips]{epsfig}
 \usepackage[none]{hyphenat}
\usepackage{amsmath,amssymb,amsfonts,enumerate,amsthm}
 \usepackage{amsgen, amstext,amsbsy,amsopn, amsthm, amsfonts,amssymb,amscd,amsmath,euscript,enumerate,url,verbatim,calc,xypic}
 \usepackage{latexsym}
 \usepackage{graphics}
 \usepackage{color}
\numberwithin{equation}{section}
\newtheorem{thm}{Theorem}[section]
\newtheorem{lem}[thm]{Lemma}
\newtheorem{cor}[thm]{Corollary}

\newtheorem{rem}[thm]{Remark}

\newtheorem{nota rem}[thm]{Notation and Remark}

\newtheorem{defen rem}[thm]{Definition and Remark}

\def\hei{\operatorname{ht}}

\def\pd{\operatorname{pd} }
 \def\Ext{\operatorname{Ext}}
\def\Hom{\operatorname{Hom}}
\def\Spec{\operatorname{Spec}}

\def\grade{\operatorname{grade}}
 
\def\cd{\operatorname{cd}}

\def\Supp{\operatorname{Supp}}

\def\Ass{\operatorname{Ass}}
 \def\dim{\operatorname{dim}}
\newcommand{\p}{\mathfrak{p}}

\newcommand{\fa}{\mathfrak{a}}

\newcommand{\m}{\mathfrak{m}}

\newcommand{\Z}{\mathbb{Z}}

\newcommand{\NN}{\mathbb{N}}

\newcommand{\Q}{\mathbb{Q}}

\begin{document}
\bibliographystyle{amsplain}

\author{Maryam Jahangiri}
\address{ Department of Mathematics, Faculty of Mathematical Sciences and Computer, Kharazmi University, Tehran,
Iran.}
\email{ jahangiri@khu.ac.ir, jahangiri.maryam@gmail.com }
 
\subjclass[2020]{13A02, 13D45, 13D02, 13C11, 13E05.  }
\keywords{Graded modules, Local cohomology modules, Bass numbers, Regular rings, Spectral sequences. }

\title[Bass numbers of graded components of... ]
{Bass numbers of graded components of local cohomology modules}

\begin{abstract}
Let $R=\bigoplus_{n\in \NN_0}R_n$ be a standard graded ring, $R_+=\bigoplus_{n\in \NN}R_n$ its irrelevant ideal,   and $M$   a finitely generated graded $R$-module.  In this paper, we study the asymptotic behavior of the sequence
$\{\mu^i(\p_0, H^j_{R_+}(M)_n)\}_{n\in \Z}$ of Bass numbers of graded components of local cohomology modules with respect to an ideal $\p_0\in \Spec(R_0)$  in each of the following cases:
\begin{enumerate}
   \item $i=0$ or $i= 1$ and $j\leq f_{R_+}(M)$,
     \item  $R_0$ is regular, 
   $i=  \hei(\p_0)$ or $i=  \hei(\p_0)- 1$ and  $j= \cd_{R_+}(M)$,
   \item $M$ is relative Cohen-Macaulay with respect to $R_+$.
     \end{enumerate}
  Here, $\cd_{R_+}(M)$ and $f_{R_+}(M)$ denote the cohomological dimension and finiteness dimension of $M$ with respect to $R_+$, respectively.
\end{abstract}
\maketitle
\section{introduction}

Throughout this paper, all rings are assumed to be commutative Noetherian with identity. Let $R= \bigoplus_{n\in \NN_0} R_n$ be  a standard graded ring, i.e., $R_0$ is a commutative Noetherian ring with identity, and $R$ is generated as an $R_0$-algebra by finitely many elements of degree 1. Let $R_+= \bigoplus_{n\in \NN} R_n$ denote the irrelevant ideal of $R$ and $M= \bigoplus_{n\in \Z} M_n$  be a finitely generated graded $R$-module. We denote by $\Q$, $\Z$, $\NN_0$ and $\NN$ the sets of rational numbers, integers, non-negative integers, and positive integers, respectively.

Let $S$ be a ring and $\fa$ an ideal of $S$. For any $i\in \NN_0$, $H^i_{\fa}(-)$ denote the $i$-\it{th local cohomology functor with respect to} $\fa$ which is the $i$-th right derived functor of the $\fa$-torsion functor $\Gamma_{\fa}(-)$. For any $S$-module $X$,
 \[\Gamma_{\fa}(X):= \cup_{t\in X} (0:_X \fa^t)= \{x\in X\mid \exists t\in \NN, \fa^t x=0\}.\]
 Reference \cite{bsh} provides a comprehensive treatment of the basic properties of local cohomology modules.

The vanishing and finiteness properties of local cohomology modules are central and highly interesting problems in this theory; see for example \cite{al, b, ckhs, dnt} and \cite{l}.

It is well-known that if $X =\bigoplus_{n\in \Z}X_n$ is a graded $R$-module  and $\fa$ is a homogeneous ideal of $R$, then for any $i\in \NN_0$, $H^i_{\fa}(X)$ naturally inherits a grading. In particular (\cite[Theorem 16. 1. 5]{bsh}):
 \begin{enumerate}
 
      \item  $H^i_{R_+}(M)_n$ is a finitely generated  $R_0$-module for any $n\in \Z$, and
     \item $H^i_{R_+}(M)_n= 0$ for all $n\gg 0$; that is, there exists $k\in \Z$ such that $H^i_{R_+}(M)_n= 0$ for all $n\geq k.$
\end{enumerate}
  In \cite{in} and \cite{jz}, the above results have been studied for ideals that either contain or are contained in the irrelevant ideal.

 Although, the graded components of local cohomology modules $H^i_{R_+}(M)_n$ are trivial in almost all positive degrees, they can behave quite irregularly in negative degrees. For example in \cite[Theorem 3. 3]{ckhs} the authors show that if $p\geq 11$ is a prime number with $p\equiv 2$ (mod 3) and $K$ is a field of characteristic $p$, then there exists a normal standard graded $K$-algebra $R$  such that for any $n\in \NN,$
  \[
\dim_K(H^2_{R_+}(R)_{-n})=
\begin{cases}
    1 & \text{if } n\equiv 0 \,(mod \, p+ 1), \\
    1 & \text{if } n= p^t \text{for some odd integer} \, t, \\
    0 & \text{otherwise.}
\end{cases}
\]

Nevertheless, it is possible to predict the asymptotic behavior of the graded components
 $H^i_{R_+}(M)_n$ as $n\rightarrow-\infty$ in certain cases, for example:
\begin{enumerate}

    \item $i= \cd_{R_+}(M)$, the cohomological dimension  of $M$ with respect to $R_+$(\cite{b}) (see (\ref{defcd}) for definition of cohomological dimension.)
    \item $i= f_{R_+}(M)$, the finiteness  dimension  of $M$ with respect to $R_+$(\cite{b}) (see (\ref{deff}) for definition of finiteness dimension.)
    \item When $R_0$ is of small dimension (\cite{b}).
    \item When $R_0$ is a regular ring and $M= R$ is a polynomial ring over $R_0$; in this case, one may consider any homogeneous ideal instead of $R_+$ (\cite{tony36}).
\end{enumerate}

In this paper we consider the asymptotic behavior of the sequence $\{\mu^i(\p_0, H^j_{R_+}(M)_n)\}_{n\in \Z}$ of Bass numbers of  the graded components of local cohomology modules with respect to an ideal $\p_0\in \Spec(R_0)$ (see \ref{Bass} for definition of Bass numbers).
Bass numbers of  local cohomology modules $H^i_{\fa}(N)$, where $\fa$ is an ideal of a ring $S$ and $N$ is an $S$-module,  have been investigated in several special cases. For example:
\begin{enumerate}

    \item $N= S$ is a regular local ring containing a field (\cite{hu} and \cite{lyu});
    \item $S$ is a semigroup ring (\cite{helm});
    \item $N$ is a finitely generated $S$-module, $i\in \{ 0, 1, 2\}$ or $R$ is regular and $i\in\{ \hei(\p),  \hei(\p)- 1\}$ (\cite{j1}); 
    \item $N= S$ is a polynomial ring over a field and $\fa$ is a square-free monomial ideal, (\cite{al} and \cite{yan}.)
\end{enumerate}

This paper is devoted in to 4 sections.
In Section 2, we present some preliminaries, with a primary focus on the graded components of *injective modules.
We show that if $R$ is flat over $R_0$ via $R_0\hookrightarrow R$ and $X= \bigoplus_{n\in \Z} X_n$ is a graded  $R$-module then, for any $n\in \Z$, $i\in \NN_0$ and any ideal $I_0$ of $R_0$ we have
\[ ^*Ext^i_{R}(R/I_0R, X)_n\cong Ext^i_{R_0}(R_0/I_0, X_n).\]

In Section 3,  we study the Bass numbers of graded components of local cohomology modules $\mu^i(\p_0, H^j_{R_+}(M)_n)$ where $j\leq f_{R_+}(M)$ (see \ref{deff} for definition of 
$f_{R_+}(M)$).
 To be more precise, it is shown (Theorem \ref{thm1}) that
if $(R_0,\m_0)$ is a local ring then for any $j\leq f_{R_+}(M)$ we have:
 \begin{enumerate}
\item 
 $\mu^{0}(\m_0, H^j_{R_+}(M)_n)=   \dim_{\tfrac{R_0}{\m_0}}(H^{j }_{R_+}(\tfrac{R}{\m_0R}, M)_n), \,\,\,\text{ for all}\,\, n\ll0.$
\item   $\mu^1(\m_0, H^j_{R_+}(M)_n)\leq \dim_{\tfrac{R_0}{\m_0}}(H^{j+ 1}_{R_+}(\tfrac{R}{\m_0R}, M)_n), \,\,\,\text{ for all}\,\, n\ll0.$
    \end{enumerate}

In Section 4,  we consider the Bass numbers   $\mu^i(\p_0, H^j_{R_+}(M)_n)$ where $j= \cd_{R_+}(M)$ (see \ref{defcd} for definition of $\cd_{R_+}(M)$).
More precisely, among other results, we show  (Theorem \ref{thm}) that if $(R_0, \m_0)$ is a regular local ring of dimension $d:= \dim(R_0)$ and $c:= \cd_{R_+}(M)$ then 
the following statements hold.
\begin{enumerate}
    
\item There exists a polynomial $p(x)$ of degree $< c$ such that
\[\mu^d(\m_0, H^c_{R_+}(M)_n)= p(n) \,\,\, \text{for  all} \,\, n\ll0.\]
In particular, if $c= 1$, then the sequence $\{\mu^d(\m_0,
H^1_{R_+}(M)_n)\}_{n\in \Z}$ of integers is ultimately constant; that is, there exists $\alpha\in \Z$  such that
 \[ \mu^d(\m_0, H^1_{R_+}(M)_n)=\alpha, \,\,\,\,\,\,\,\,\   \text{for  all}\,\,n\ll0.\]

\item If $c> 0$ and   $\mu^d(\m_0, H^c_{R_+}(M))\neq 0$, then
$\mu^d(\m_0, M)\neq 0$ and the sequence $\{\mu^d(\m_0, H^c_{R_+}(M)_n)\}_{n\in \Z}$ is asymptotically decreasing. Moreover,  if $c\geq 2$ it is asymptotically strictly decreasing.

\item If $H^c_{R_+}(\Ext^{d- 1}_{R }(\tfrac{R}{\m_0R}, M))$ is finitely generated then there exists a polynomial $p(x)\in \Q[x]$ of degree $< c-1$ such that
\[ \mu^{d- 1}(\m_0, H^c_{R_+}(M)_n)\leq  p(n), \,\,\, \text{for   all}\,\, n\ll0.\]
If, in addition, $\Ext^{d}_{R }(\tfrac{R}{\m_0R}, H^{c- 1}_{R_+}(M))$ is finitely generated then the equality holds.
\end{enumerate}
 
We also study the Bass numbers $\mu^{i}(\m_0, H^j_{R_+}(M)_n)$ in the case where $M$ is relative Cohen-Macaulay with respect to $R_+$ (see \ref{defrcm} for definition of relative Cohen-Macaulay modules). In particular, we show  (Corollar \ref{rcm}) that 
  if $(R_0, \m_0)$ is local,  $M$ is relative Cohen-Macaulay with respect to $R_+$ of degree $c$ and $R$ is flat as an $R_0$-module, then   the following statements holds.
\begin{enumerate}
    \item For any $i\in \NN_0$ and  $n\in \Z$, $\mu^i(\m_0, H^c_{R_+}(M)_n)=  \dim_{\tfrac{R_0}{\m_0}}(H^{i+ c }_{R_+}(\tfrac{R}{\m_0R}, M)_n).$
 \item Let  $(R_0, \m_0)$ be regular of dimension $d$. Then,
 \begin{enumerate}
     \item there exists a polynomial $p(x)\in \Q[x]$ of degree $< c- 1$ such that 
     \[\mu^{d- 1}(\m_0, H^c_{R_+}(M)_n)\geq p(n),\,\,\,\,\,\,\,\,\text{for all}\,\,n\ll 0,\]
     \item if $H^{c- 1}_{R_+}(\Ext^d_R(\tfrac{R}{\m_0R}, M))$ is not finitely generated, then 
     \[\mu^{d- 1}(\m_0, H^c_{R_+}(M)_n)> 0,\,\,\,\,\,\,\,\,\text{for all}\,\,n\ll 0.\]
     Therefore, $\mu^{d- 1}(\m_0, H^c_{R_+}(M))$ is not finite.
 \end{enumerate}
\end{enumerate}
 

Here, the notation $n\ll0$ ($n\gg 0$),   means  "for sufficiently small (large) values of $n$".
We maintain the assumptions and settings introduced in the Introduction throughout the paper.
\section{Preliminaries}
Throughout the paper,  $ ^*\mathfrak{C}_R$ and $ ^*\mathfrak{C}^f_R$  denote, respectively, the category of  graded and finitely generated graded $R$-modules and  homogeneous homomorphisms of degree zero. The injective objects in $ ^*\mathfrak{C}_R$ 
 are called   *injective modules. For any graded $R$-module $X$, we use   $ ^*E_R(X)$ to denote the *injective envelope of $X$. Note that, in view of \cite[Theorem 13. 2. 4]{bsh}, $ ^*\mathfrak{C}_R$ has enough injectives.
 
 The functor $ ^*\Hom_R(-, -)$
is the restriction of the usual $\Hom$-functor to  $ ^*\mathfrak{C}_R$ and for any $i\in \NN_0$, $ ^*\Ext^i_R(-, -)$ denotes its $i$-th right derived functor in  $ ^*\mathfrak{C}_R$.

\begin{rem}\label{rem}
The following statements will be used several times in the paper. We present them here for the reader's convenience.
  \begin{enumerate}
      \item (\cite[13. 1. 8(iv)]{bsh})\label{bh1.5.19} Let $M$ and $N$ be two graded $R$-modules with $M$ finitely generated. Then for any $i\in \NN_0:$\[ ^*\Ext^i_R(M, N)= \Ext^i_R(M, N),\] (here we do not need $R$ to be standard graded).

\item (\cite[Theorem 10. 74]{r})\label{rot10.74} Assume that $\Phi: S\longrightarrow T$ is a flat ring homomorphism, $X$ is an
$S$-module and $Y$ is a $T$-module. Then
for all $i\in \NN_0$:
\[\Ext^i_T(X\otimes_S T, Y)\cong \Ext^i_S(X, Y).\]
\item (\cite[24. 10]{wis})\label{limm} Let $S$ be a ring and $X$  an $S$-module. Then  $X$ is finitely generated if and only if the natural homomorphism
\[\phi_X : \underset{ \overset {\longrightarrow}{i\in \Lambda}}{ \lim } \Hom_S(X, M_i )\longrightarrow \Hom_S(X, \underset{ \overset {\longrightarrow}{i\in \Lambda}}{ \lim }M_i )\]
 is an isomorphism for every direct system $(M_i, f_{ij})_{i, j\in \Lambda}$ of modules in $\sigma(X)$
with $f_{ij}$ monomorphisms.
  \end{enumerate}
\end{rem}

The following  lemma studies the graded components of an *injective $R$-module.
\begin{lem}\label{1}
Assume that $R$ is a flat $R_0$-module via $R_0\hookrightarrow R$ and let $E= \bigoplus_{n\in \Z} E_n$ be an *injective $R$-module. Then, $E_n$ is an injective $R_0$-module for all $n\in \Z$.
\end{lem}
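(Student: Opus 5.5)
The plan is to check Baer's criterion for $E_n$ over $R_0$. Fix $n\in\Z$, an ideal $I_0$ of $R_0$, and an $R_0$-homomorphism $f\colon I_0\to E_n$; we must extend $f$ to $R_0$. Equivalently, we will show $\Ext^1_{R_0}(R_0/I_0,E_n)=0$. We transport the problem into the graded category over $R$, where $E$ is injective, by extension of scalars along the flat map $R_0\to R$.

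First, since $R$ is flat over $R_0$, we have $I_0\otimes_{R_0}R\cong I_0R$, a homogeneous ideal of $R$, and $R_0/I_0\otimes_{R_0}R\cong R/I_0R$. Consider the graded $R$-homomorphism
\[
\tilde f\colon I_0R(-n)\cong I_0\otimes_{R_0}R(-n)\longrightarrow E, \qquad a\otimes r\mapsto r f(a),
\]
which is homogeneous of degree zero. Here $I_0$ sits in degree $n$ of $I_0\otimes_{R_0}R(-n)$, and $r\in R_m$ sends $f(a)\in E_n$ into $E_{n+m}$. The map is well defined because $f$ is $R_0$-linear and $E$ is an $R$-module. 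We need flatness to identify $I_0\otimes_{R_0}R$ with the submodule $I_0R(-n)$ of $R(-n)$, that is, to know that the inclusion $I_0\to R_0$ stays injective after tensoring.

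Since $E$ is *injective and $I_0R(-n)\hookrightarrow R(-n)$ is a degree-zero monomorphism in $^*\mathfrak{C}_R$, the map $\tilde f$ extends to a degree-zero homomorphism $g\colon R(-n)\to E$. Restricting $g$ to the degree-$n$ component gives $g_n\colon R(-n)_n=R_0\to E_n$, which is $R_0$-linear. On $I_0=(I_0R(-n))_n$, the degree-$n$ part being $I_0R_0=I_0$, this restriction agrees with $\tilde f$, which is $f$. So $g_n$ extends $f$, and Baer's criterion shows that $E_n$ is an injective $R_0$-module.

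The main obstacle is making the degree bookkeeping precise. We must verify that $(I_0R)_0=I_0$ and that the tensor identification is compatible with gradings. Both follow from $R=\bigoplus R_m$ with $R_0$ a direct summand and from flatness. Alternatively, the argument can be phrased via Remark \ref{rem}(\ref{rot10.74}) as $\Ext^1_{R_0}(R_0/I_0,E_n)$ being the degree-$n$ part of $^*\Ext^1_R(R/I_0R,E)=0$, but the direct Baer argument above avoids needing that graded identification beforehand.
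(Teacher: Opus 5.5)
Your proof is correct, and it takes a different route from the paper's.

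The paper argues through Ext. For $i\geq 1$ it writes
$0={}^*\Ext^i_R(R/I_0R,E)\cong \Ext^i_R(R/I_0R,E)\cong\Ext^i_{R_0}(R_0/I_0,E)\cong\bigoplus_{n\in\Z}\Ext^i_{R_0}(R_0/I_0,E_n)$.
This uses three facts:
\begin{enumerate}
\item graded and ungraded Ext agree when the first argument is finitely generated;
\item Ext satisfies flat base change along $R_0\to R$;
\item $\Ext^i_{R_0}(R_0/I_0,-)$ commutes with direct sums.
\end{enumerate}
The paper then concludes by Baer's criterion.

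You verify Baer's criterion by hand. You transport $f$ to the degree-zero map $\tilde f$ on $(I_0\otimes_{R_0}R)(-n)\cong I_0R(-n)$, extend it over $R(-n)$ by *injectivity, and read off the degree-$n$ component. Conceptually, you are using that $X\mapsto (X\otimes_{R_0}R)(-n)$ is left adjoint to $E\mapsto E_n$. This functor is exact because $R$ is $R_0$-flat, so its right adjoint preserves injectives.

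\textbf{What your route buys.} It is more elementary and self-contained: it needs none of the three Ext facts above and no Noetherian hypothesis. It also pinpoints exactly where flatness enters, namely the injectivity of $I_0\otimes_{R_0}R\to R$. That hypothesis is genuinely needed. For $R=\Z[x]/(2x)$, the module $E=\bigoplus_{n}\Hom_{\Z}(R_{-n},\Q/\Z)$ is *injective, since its degree-zero graded $\Hom$ functor is $\Hom_{\Z}((-)_0,\Q/\Z)$. Yet $E_{-1}\cong\Z/2\Z$, which is not an injective $\Z$-module.

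\textbf{What the paper's route buys.} It gives the vanishing of all higher $\Ext$ for every $n$ at once. It also sets up the same identifications that are reused in Lemma \ref{2}.
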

\begin{proof}
Let $I_0$ be an ideal of $R_0$. Then, the result follows using  the following chain of isomorphisms, for any $i\in \NN$.
\begin{align}
     0&=  ^*\Ext^i_{R}(R/I_0R, E) \nonumber\\
& \cong \Ext^i_{R}(R/I_0R, E) && \text{ (by   \ref{rem}(\ref{bh1.5.19}))}\nonumber \\
&\cong \Ext^i_{R_0}(R_0/I_0, E)   &&\text{(by  \ref{rem}(\ref{rot10.74}))}\nonumber \\
&\cong \bigoplus_{n\in \Z} \Ext^i_{R_0}(R_0/I_0, E_n). && \text{(by  \ref{rem}(\ref{limm}))}\nonumber
\end{align}

\end{proof}
The above lemma implies that if $X= \bigoplus_{n\in \Z}X_n$ is a graded $R$-module and
$$ ^* E^{\bullet}_R(X): 0\rightarrow E^0\rightarrow \cdots \rightarrow E^i\stackrel{d^i}{\rightarrow} E^{i+ 1}\rightarrow \cdots  $$
 denotes an *injective resolution of $X$, where for any $i\in \NN_0$, $E^i= \bigoplus_{n\in \Z} E^i_n$ is the $i$-th *injective module in $ ^*E^{\bullet}_R(X)$ and
 $d^i=(d^i_n)_{n\in \Z}: E^i\rightarrow   E^{i+ 1}$, then for any $n\in \Z$,
 $$ ^*E^{\bullet}_R(X)_n: 0\rightarrow E^0_n\rightarrow \cdots \rightarrow E^i_n\stackrel{d^i_n}{\rightarrow} E^{i+ 1}_n\rightarrow \cdots $$
 is an injective resolution of $X_n$ as an $R_0$-module, in the case where $R$ is   flat over $R_0$  via $R_0\hookrightarrow R$.

\begin{lem}\label{2}
Assume that $R$ is a flat $R_0$-module via $R_0\hookrightarrow R$ and let $X= \bigoplus_{n\in \Z} X_n$ be a graded  $R$-module. Then, for any $n\in \Z$, $i\in \NN_0$ and any ideal $I_0$ of $R_0$ we have \[ \Ext^i_{R}(R/I_0R, X)_n\cong \Ext^i_{R_0}(R_0/I_0, X_n).\]
\end{lem}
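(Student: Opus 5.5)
We compute both sides from a single *injective resolution of $X$ and compare degree by degree. Fix an *injective resolution $^*E^{\bullet}_R(X)$ of $X$ in $^*\mathfrak{C}_R$. Since $R/I_0R$ is a finitely generated graded $R$-module, Remark \ref{rem}(\ref{bh1.5.19}) identifies $\Ext^i_R(R/I_0R, X)$ with $^*\Ext^i_R(R/I_0R,X)$. The latter is the $i$-th cohomology of the complex $^*\Hom_R(R/I_0R, {}^*E^{\bullet}_R(X))$, and this complex is the complex of graded modules $(0:_{E^\bullet} I_0R)$. So the degree-$n$ component of $\Ext^i_R(R/I_0R,X)$ is the $i$-th cohomology of the complex of $R_0$-modules $\big((0:_{E^\bullet} I_0R)\big)_n$.

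The key identification is degreewise:
\[(0:_{E^i} I_0R)_n = (0:_{E^i_n} I_0) \cong \Hom_{R_0}(R_0/I_0, E^i_n).\]
The equality holds because $I_0R$ is generated by the elements of $I_0$, which have degree $0$. A homogeneous element $x\in E^i_n$ is therefore annihilated by $I_0R$ if and only if it is annihilated by $I_0$. Moreover, an element of $E^i$ lies in $(0:_{E^i}I_0R)$ exactly when each of its homogeneous components does, since $I_0$ acts by degree-$0$ elements. These identifications are compatible with the differentials $d^i_n$, because $d^i$ is homogeneous of degree zero. Hence the degree-$n$ part of $\Hom_R(R/I_0R, {}^*E^{\bullet}_R(X))$ equals $\Hom_{R_0}(R_0/I_0, {}^*E^{\bullet}_R(X)_n)$ as complexes.

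Finally, by Lemma \ref{1} and the remark following it, flatness of $R$ over $R_0$ ensures that $^*E^{\bullet}_R(X)_n$ is an injective resolution of $X_n$ over $R_0$. The cohomology of $\Hom_{R_0}(R_0/I_0, {}^*E^{\bullet}_R(X)_n)$ is therefore $\Ext^i_{R_0}(R_0/I_0, X_n)$. Since taking cohomology commutes with passing to a graded component, we get $\Ext^i_R(R/I_0R,X)_n\cong \Ext^i_{R_0}(R_0/I_0,X_n)$ for all $i$ and $n$.

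The only substantive input is that the graded pieces of an *injective module are $R_0$-injective, which is exactly Lemma \ref{1}. The step needing the most care is the bookkeeping that $^*\Hom$ into $E^\bullet$ splits degreewise. For this I would use that a degree-$0$ homogeneous map out of $R/I_0R$ shifted by $-n$ is determined by the image of $1$ in degree $n$. Equivalently, $^*\Hom_R(R/I_0R,E)_n\cong(0:_E I_0R)_n$, which avoids any finiteness issue with direct sums.
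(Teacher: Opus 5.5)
Your proof is correct and follows essentially the same route as the paper. You compute $\Ext^i_R(R/I_0R,X)_n$ from a *injective resolution of $X$, identify the degree-$n$ part of $\Hom_R(R/I_0R, {}^*E^{\bullet}_R(X))$ with $\Hom_{R_0}(R_0/I_0, {}^*E^{\bullet}_R(X)_n)$, and use Lemma \ref{1} to see that ${}^*E^{\bullet}_R(X)_n$ is an $R_0$-injective resolution of $X_n$. The only difference is cosmetic: you make the degreewise identification directly through the annihilators $(0:_{E^i_n} I_0)$, whereas the paper goes through the adjunction $\Hom_R(R_0/I_0\otimes_{R_0}R, E)\cong \Hom_{R_0}(R_0/I_0, E)$ and then takes degree-$n$ components.
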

\begin{proof}
Assume that $^*E^{\bullet}_R(X)$ denote an *injective resolution of $X$. Then, for any $n\in \Z$, $i\in \NN_0$ and any ideal $I_0$ of $R_0$ we have
 \begin{align*}
  \Ext^i_{R}(R/I_0R, X)_n&=    ^*\Ext^i_{R}(R/I_0R, X)_n   && \text{(by \ref{rem}(\ref{bh1.5.19}))}\\
  &=H^i( ^*\Hom_R(R/I_0R,  ^*E^{\bullet}_R(X)))_n  \\
  &= H^i(\Hom_R(R/I_0R,  ^*E^{\bullet}_R(X))_n)   &&\text{ (by  \ref{rem}(\ref{bh1.5.19}))} \\
  &\cong  H^i(\Hom_{R_0}(R_0/I_0,  ^*E^{\bullet}_R(X))_n)    &&\text{ (by  \ref{rem}(\ref{rot10.74})) }
  \\
 &=  H^i(\Hom_{R_0}(R_0/I_0, ^*E^{\bullet}_R(X)_n))   \\
 &\cong  H^i(\Hom_{R_0}(R_0/I_0, E^{\bullet}_{R_0}(X_n)))    &&\text{ (by   \ref{1})}   \\
 &=  \Ext^i_{R_0}(R_0/I_0, X_n).
  \end{align*}
\end{proof}
In the rest of this section, we  require the notion of generalized local cohomology modules. This concept was introduced by Jürgen Herzog in \cite{h}. 

Let $\fa$ be an ideal of a ring $S$ and let  $X$ and $Y$ be $S$-modules. For each $i\in \NN_0$, the $i$-th   generalized local cohomology module of $X$
and $Y$ with respect to $\fa$ is defined by
\[H^i_{\fa}(X, Y):= \underset{ \overset {\longrightarrow}{t\in \mathbb{N}}}{ \lim }\Ext^i_{\fa}(
\tfrac{X}{\fa^t X}, Y).\]
Note that if $X= S$ then, in view of \cite[Theorem 1. 3. 8]{bsh}, this concept coincides with the usual local cohomology modules of $Y$ with respect to $\fa$; that is, $H^i_{\fa}(S, Y)\cong H^i_{\fa}(Y)$.

The following version of the Grothendieck spectral sequence will be used in the subsequent lemma.
\begin{lem} (\cite[Theorem 10. 47]{r}) \label{rot10.47}
      Let $\mathcal{A}\stackrel{\mathcal{G}}{ \rightarrow} \mathcal{B} \stackrel{\mathcal{F}}{ \rightarrow}\mathcal{C}$
  be covariant additive functors, where $\mathcal{A}$, $\mathcal{B}$, and $\mathcal{C}$ are abelian categories with enough injectives. Assume that $\mathcal{F}$ is left exact and that $\mathcal{G}(E)$ is right $\mathcal{F}$-acyclic for every
injective object $E$ in $\mathcal{A}$. Then, for every object $A$ in $\mathcal{A}$, there is a third quadrant spectral sequences with
\[E_2^{i, j}= R^i\mathcal{F}(R^j\mathcal{G}(A))\underset{i}{\Rightarrow}R^{i+ j}\mathcal{F}\mathcal{G} (A).\]
\end{lem}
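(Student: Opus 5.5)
The statement is the standard Grothendieck spectral sequence for a composite of functors. I would prove it by applying a Cartan--Eilenberg resolution to the complex $\mathcal{G}(E^\bullet)$ and comparing the two filtrations of the resulting double complex.

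\textbf{Setup.} Fix $A$ in $\mathcal{A}$ and choose an injective resolution $A\to E^\bullet$ in $\mathcal{A}$. Then $\mathcal{G}(E^\bullet)$ is a cochain complex in $\mathcal{B}$. Its cohomology is $H^j(\mathcal{G}(E^\bullet))=R^j\mathcal{G}(A)$. Since $\mathcal{B}$ has enough injectives, I take a Cartan--Eilenberg injective resolution $\mathcal{G}(E^\bullet)\to I^{\bullet,\bullet}$. This is a double complex of injectives with the following properties: for each $j$, the induced columns give injective resolutions of $Z^j$, $B^j$ and of $H^j=R^j\mathcal{G}(A)$; and the short exact sequences $0\to B^j\to Z^j\to H^j\to 0$ and $0\to Z^j\to \mathcal{G}(E^j)\to B^{j+1}\to 0$ lift to split exact sequences of resolutions. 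Apply $\mathcal{F}$ to obtain the first-quadrant double complex $\mathcal{F}(I^{\bullet,\bullet})$ and let $T$ be its total complex. Both spectral sequences of this double complex converge to $H^*(T)$.

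\textbf{First filtration.} Take vertical cohomology first, i.e.\ along the resolution direction for fixed $j$. This gives $R^i\mathcal{F}(\mathcal{G}(E^j))$, which vanishes for $i>0$ because $\mathcal{G}(E^j)$ is right $\mathcal{F}$-acyclic by hypothesis. For $i=0$ it gives $\mathcal{F}\mathcal{G}(E^j)$. The spectral sequence therefore collapses on one edge, and $H^n(T)\cong H^n(\mathcal{F}\mathcal{G}(E^\bullet))=R^n(\mathcal{F}\mathcal{G})(A)$. This uses that $E^\bullet$ is an injective resolution, so this cohomology computes the right derived functors of $\mathcal{F}\mathcal{G}$ (which is left exact on the relevant objects as needed).

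\textbf{Second filtration.} Take horizontal cohomology first, in the $j$-direction for fixed resolution degree $i$. Because the Cartan--Eilenberg sequences split and $\mathcal{F}$ is additive, $\mathcal{F}$ commutes with taking horizontal cycles, boundaries and cohomology. The horizontal cohomology is therefore $\mathcal{F}(I^{i}_{H^j})$, where $I^\bullet_{H^j}$ is an injective resolution of $H^j=R^j\mathcal{G}(A)$. Taking cohomology in the other direction then yields $E_2^{i,j}=R^i\mathcal{F}(R^j\mathcal{G}(A))$, which abuts to $H^{i+j}(T)\cong R^{i+j}(\mathcal{F}\mathcal{G})(A)$.

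\textbf{Main obstacle.} The delicate step is the commutation of $\mathcal{F}$ with horizontal cohomology in the second filtration. This is precisely where the split exactness of the Cartan--Eilenberg resolution is essential, and I would verify it carefully column by column. Independence of all choices follows from the standard uniqueness of Cartan--Eilenberg resolutions up to homotopy. Since the paper cites this as \cite[Theorem 10.47]{r}, one may simply invoke that reference.
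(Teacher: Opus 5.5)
Your proposal is correct. It is the standard Cartan--Eilenberg double-complex proof of the Grothendieck spectral sequence, which is the argument in the cited source; the paper itself gives no proof and simply invokes Rotman's Theorem 10.47. One small clarification: your parenthetical about $\mathcal{F}\mathcal{G}$ being ``left exact on the relevant objects'' is not needed. For merely additive functors, $R^n(\mathcal{F}\mathcal{G})(A)$ is by definition $H^n(\mathcal{F}\mathcal{G}(E^\bullet))$ for the deleted injective resolution $E^\bullet$. The only place left exactness enters is through $\mathcal{F}$, which lets you identify the surviving row $R^0\mathcal{F}(\mathcal{G}(E^j))$ with $\mathcal{F}\mathcal{G}(E^j)$.
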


The following lemma plays a fundamental role in establishing the main results of this paper.

\begin{lem}\label{spec}
Let $X$ be a finitely generated graded $R$-module. Then there exist  the following homogeneous convergences of spectral sequences.
\begin{enumerate}
    \item \label{spec1}  $E_2^{i, j}= \Ext^i_{R }(X, H^j_{R_+}(M))\underset{i}{\Rightarrow}H^{i+ j}_{R_+}(X, M).$
    \item \label{spec2} $E_2^{i, j}= H^i_{R_+}(\Ext^j_{R }(X, M))\underset{i}{\Rightarrow}H^{i+ j}_{R_+}(X, M).$
\end{enumerate}
 
\end{lem}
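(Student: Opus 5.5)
Both spectral sequences will come from Lemma \ref{rot10.47}, applied in the category $^*\mathfrak{C}_R$ of graded $R$-modules, which has enough injectives. Throughout, the target functor is $H^0_{R_+}(X,-)=\Gamma_{R_+}(\Hom_R(X,-))=\Hom_R(X,\Gamma_{R_+}(-))$. This identity holds because $X$ is finitely generated: a homomorphism $f\colon X\to Y$ has image in $\Gamma_{R_+}(Y)$ if and only if $R_+^t f=0$ for some $t$, which happens if and only if $f$ factors through $X/R_+^tX$. Hence
\[\underset{\longrightarrow}{\lim}\,\Hom_R(X/R_+^tX,Y)\cong \Hom_R(X,\Gamma_{R_+}(Y))\cong \Gamma_{R_+}(\Hom_R(X,Y)).\]
All of these identifications are natural and homogeneous. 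Also $H^i_{R_+}(X,-)$ is the $i$-th right derived functor of $H^0_{R_+}(X,-)$, since direct limits are exact. By Remark \ref{rem}(\ref{bh1.5.19}), the graded Ext and the ordinary Ext agree, and so do the graded and ordinary local cohomology. The latter is \cite[13.2]{bsh}: a *injective module is $\Gamma_{R_+}$-acyclic. So computing in $^*\mathfrak{C}_R$ gives the same modules, now with homogeneous maps.

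For (\ref{spec1}), take $\mathcal{G}=\Gamma_{R_+}$ and $\mathcal{F}=\Hom_R(X,-)$. Then $\mathcal{F}\mathcal{G}=H^0_{R_+}(X,-)$, and $R^j\mathcal{G}=H^j_{R_+}$, $R^i\mathcal{F}=\Ext^i_R(X,-)$. The hypothesis to check is that $\Gamma_{R_+}(E)$ is $\Hom_R(X,-)$-acyclic for every *injective $E$. I would show that $\Gamma_{R_+}(E)$ is itself *injective. By \cite[Ch.~13]{bsh}, a *injective module is a direct sum of shifts of modules $^*E(R/\p)$ with $\p$ homogeneous prime. Moreover $\Gamma_{R_+}(^*E(R/\p))$ equals $^*E(R/\p)$ if $\p\supseteq R_+$ and is $0$ otherwise. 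Since $R$ is Noetherian, direct sums of *injectives are *injective, so $\Gamma_{R_+}(E)$ is *injective and therefore acyclic.

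For (\ref{spec2}), take $\mathcal{G}=\Hom_R(X,-)$ and $\mathcal{F}=\Gamma_{R_+}$. Again $\mathcal{F}\mathcal{G}=H^0_{R_+}(X,-)$, with $R^j\mathcal{G}=\Ext^j_R(X,-)$ and $R^i\mathcal{F}=H^i_{R_+}$. Now we must check that $\Hom_R(X,E)$ is $\Gamma_{R_+}$-acyclic for every *injective $E$. I expect this to be the main obstacle, because $\Hom_R(X,E)$ need not be *injective. My first attempt is to use a graded free presentation $F_1\to F_0\to X\to 0$ of finitely generated free modules, which gives an exact sequence
\[0\to \Hom_R(X,E)\to \Hom_R(F_0,E)\to \Hom_R(F_1,E).\]
The two right-hand terms are finite direct sums of shifts of $E$, so they are *injective. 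However, this only realizes $\Hom_R(X,E)$ as a kernel and does not directly give acyclicity.

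I would therefore argue more structurally, using the decomposition $E=\bigoplus{}^*E(R/\p)(a)$. Since $X$ is finitely generated, $\Hom_R(X,-)$ commutes with direct sums, and $H^i_{R_+}$ commutes with direct sums as well, so it suffices to treat $E={}^*E(R/\p)$. If $\p\supseteq R_+$, then $E$ is $R_+$-torsion, hence so is $\Hom_R(X,E)$, and a torsion module is $\Gamma_{R_+}$-acyclic. If $\p\not\supseteq R_+$, pick a homogeneous $r\in R_+\setminus\p$. Multiplication by $r$ is an isomorphism on $E$, hence also on $\Hom_R(X,E)$. Therefore $H^i_{R_+}(\Hom_R(X,E))$ is annihilated by a power of $r$ (being $R_+$-torsion) while $r$ acts on it invertibly, so it vanishes. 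With both hypotheses of Lemma \ref{rot10.47} verified, both spectral sequences exist. They are homogeneous because every functor and every resolution used lives in $^*\mathfrak{C}_R$.
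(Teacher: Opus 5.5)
Your proposal is correct. Part (1) is the paper's argument: the paper simply quotes the fact that $\Gamma_{R_+}(E)$ is *injective, which you re-derive from the decomposition of *injectives into shifts of $^*E(R/\p)$.

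The genuine difference is the acyclicity check in (2). The paper takes the route you abandoned, but carries it to the end. It chooses a full resolution $F_\bullet$ of $X$ by finitely generated graded free modules. Since $\Ext^i_R(X,E)={}^*\Ext^i_R(X,E)=0$ for $i>0$, the complex $0\to\Hom_R(X,E)\to\Hom_R(F_0,E)\to\Hom_R(F_1,E)\to\cdots$ is exact. So it is a *injective resolution of $\Hom_R(X,E)$, not just a kernel presentation. Applying $\Gamma_{R_+}$ and the natural isomorphism $\Gamma_{R_+}(\Hom_R(F_i,-))\cong\Hom_R(F_i,\Gamma_{R_+}(-))$ gives $H^i_{R_+}(\Hom_R(X,E))\cong\Ext^i_R(X,\Gamma_{R_+}(E))=0$, which reuses the input from (1).

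Your route instead reduces to a single $^*E(R/\p)$, using the graded Matlis decomposition and the fact that both $\Hom_R(X,-)$ (for $X$ finitely generated) and $H^i_{R_+}$ commute with direct sums. You then separate two cases. If $\p\supseteq R_+$, everything is $R_+$-torsion. Otherwise a homogeneous $r\in R_+\setminus\p$ acts invertibly on $\Hom_R(X,E)$, hence on its local cohomology, which is also $r$-power torsion and so vanishes. All of these steps are valid.

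The paper's argument is shorter and needs only one structural fact about *injectives, namely that $\Gamma_{R_+}$ preserves them. Yours uses more of their structure theory: that homogeneous elements outside $\p$ act as automorphisms on $^*E(R/\p)$, and that $^*E(R/\p)$ is $\p$-torsion. In return it shows concretely why the vanishing happens summand by summand. You also justify explicitly that $H^i_{R_+}(X,-)$ is the derived functor of $\Gamma_{R_+}(X,-)$ on $^*\mathfrak{C}_R$, which the paper leaves implicit.
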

\begin{proof}
\begin{enumerate}
    \item 
Let $E$ be an *injective $R$-module. Then, by \cite[ 13. 2. 8]{bsh}, $\Gamma_{R_+}(E)$ is also an *injective $R$-module. Moreover, by \ref{rem}(\ref{bh1.5.19}),  for any $i\in \NN$,
 \[0=  ^*\Ext^i_{R }(X, \Gamma_{R_+}(E))=  \Ext^i_{R }(X, \Gamma_{R_+}(E)).\]

Furthermore, we have the following homogeneous isomorphisms of graded $R$-modules
\begin{align}\label{a}
\Hom_R(X, \Gamma_{R_+}(M))&\cong \Hom_R(X,\underset{ \overset {\longrightarrow}{t\in \mathbb{N}}}{ \lim }\,\,   \Hom _{R}(R/ R_+^{t}, M) && \text{(by \cite[Theorem 1. 3. 8]{bsh})} \\
&\cong \underset{ \overset {\longrightarrow}{t\in \mathbb{N}}}{ \lim }\,\,\Hom_R(X, \Hom _{R}(R/ R_+^{t}, M)) && \text{(by \ref{rem}(\ref{limm})) } \nonumber\\
&\cong \underset{ \overset {\longrightarrow}{t\in \mathbb{N}}}{ \lim }\,\, \Hom_R(\tfrac{X}{R_+^{t}X}, M)  \nonumber\\
&=  \Gamma_{R_+}(X, M).\nonumber
\end{align}
The above isomorphisms are functorial, i.e.
\begin{equation}\label{wwww}
\Hom_R(-, \Gamma_{R_+}(-))\cong  \Gamma_{R_+}(-, -):
 ^*\mathfrak{C}^f_R\times  ^*\mathfrak{C}^f_R\rightarrow
  ^*\mathfrak{C}_R.
\end{equation}
Now, Lemma \ref{rot10.47} completes the proof.
\item
 
We have the following homogeneous isomorphisms of graded $R$-modules
\begin{align}\label{aa}
\Gamma_{R_+}(\Hom_R(X, M))&\cong \underset{ \overset {\longrightarrow}{t\in \mathbb{N}}}{ \lim }\,\, \Hom_R(\tfrac{R}{R_+^{t}}, \Hom_R(X, M)) \nonumber\\
&\cong \underset{ \overset {\longrightarrow}{t\in \mathbb{N}}}{ \lim }\,\, \Hom_R(\tfrac{X}{R_+^{t}X}, M) \nonumber\\
&\cong \underset{ \overset {\longrightarrow}{t\in \mathbb{N}}}{ \lim }\,\, \Hom_R(X, \Hom_R(\tfrac{R}{R_+^{t}}, M)) \nonumber\\
&\cong \Hom_R(X, \underset{ \overset {\longrightarrow}{t\in \mathbb{N}}}{ \lim }\,\, \Hom_R(\tfrac{R}{R_+^{t}}, M)) \nonumber &&\text{(by  \ref{rem}(\ref{limm}))}  \\
&\cong \Hom_R(X, \Gamma_{R_+}(M)).
\end{align}
The above isomorphisms are functorial, i.e.
\begin{equation}\label{cc}
\Hom_R(-, \Gamma_{R_+}(-))\cong \Gamma_{R_+}(\Hom_R(-, -)):
 ^*\mathfrak{C}^f_R\times  ^*\mathfrak{C}^f_R\rightarrow
 ^*\mathfrak{C}_R.
\end{equation}
Now, let $F^X_{\bullet}$ be a finite graded free resolution of $X$, i.e. a homogeneous resolution consisting of finitely generated graded free $R$-modules.
Then, it is easy to see that, for any *injective $R$-module $E$, the complex $\Hom_R(F^X_{\bullet}, E)$ is an *injective resolution of $\Hom_R(X, E)$. Therefore,
\begin{align}
H^i_{R_+}(\Hom_{R }(X, E))&= H^i(\Gamma_{R_+}(\Hom_R(F^X_{\bullet}, E)))\nonumber\\
&\cong H^i(\Hom_R(F^X_{\bullet},  \Gamma_{R_+}(E))) &&\text{(by \ref{cc})}\nonumber\\
&= \Ext^i_{R }(X, \Gamma_{R_+}(E))\nonumber\\
&=  ^*\Ext^i_{R }(X, \Gamma_{R_+}(E))=0.  &&\text{(by \cite[13. 2. 8]{bsh}  and   \ref{rem}(\ref{bh1.5.19}))}
\end{align}
Now, (\ref{wwww}) and (\ref{cc}) imply the following isomorphism in $ ^*\mathfrak{C}^f_R$,
\[\Gamma_{R_+}(\Hom_R(X, -)) \cong \Gamma_{R_+}(X, -),\]
and  Lemma \ref{rot10.47}  completes the proof.
\end{enumerate}
\end{proof}

\section{Bass numbers of $H^{f_{R_+}(M}_{R_+}(M)_n$}
Let $S$ be a ring and $\fa$ be an ideal of $S$.
The $\fa$-finiteness dimension of an $S$-module $X$ is defined by
\begin{equation}\label{deff}
    f_{\fa}(X):= \inf\{i\in \NN_0\arrowvert H^i_{\fa}(X) \,\, \text{is not finitely generated}\}.
\end{equation}
  In general, $f_{\fa}(X)$ may be  infinite. Moreover, if $X$ is finitely generated, then by \cite[Proposition 9. 1. 2]{bsh},
 \[f_{\fa}(X)= \inf\{i\in \NN\arrowvert \fa\nsubseteq \sqrt{0:_S H^i_{\fa}(X)}\}.\]
Graded components of local cohomology modules, at the level of   finiteness dimension, exhibit favorable asymptotic behavior. For example:
\begin{itemize}
   \item  (\cite[Theorem 3. 6]{jz}) Let $\fa_0$ be an ideal of $R_0$ and set $f:= f_{\fa_0+ R_+}(M)$. Then the set $\{\Ass_{R_0}(H^{f }_{\fa_0+ R_+}(M)_n)\}_{n\in \Z}$ is asymptotically stable; that is, there exists $t\in \Z$ such that
    \[\Ass_{R_0}(H^{f }_{\fa_0+ R_+}(M)_n)= \Ass_{R_0}(H^{f }_{\fa_0+ R_+}(M)_t),\,\,\text{for all}\,\,n\leq t.\]
   
\end{itemize}

In this section we study the asymptotic behavior of the Bass numbers of the graded components  
  $H^i_{R_+}(M)_n$ as $n\rightarrow -\infty $ when $i\leq f_{R_+}(M)$. 

Recall that, if $S$ is a ring, $X$ is an $S$-module and $\p\in \Spec(S)$, then for any $i\in \NN_0$ the $i$-th Bass number of $X$ with respect to $\p$ is defined by
\begin{equation}\label{Bass}
    \mu^i(\p, X):= \dim_{\kappa(p)}(\Ext^i_S(\tfrac{S}{\p}, X)_{\p}).
\end{equation}
Here, $\kappa({\p_0}):= \tfrac{(R_0)_{\p_0}}{\p_0(R_0)_{\p_0}},$ is the residue field of the local ring $(R_0)_{\p_0}$.
The really important property of the
$i$-th Bass number of $X$ with respect to $\p$  is that it gives, when the (uniquely determined) $i$-th term in the minimal injective resolution of $X$ is expressed as a direct sum of indecomposable injective $S$-modules, the number of those direct summands which
are isomorphic to $E_S(\tfrac{S}{\p})$, the injective envelope of the $S$-module $\tfrac{S}{\p}$.

The following   lemma will be used repeatedly, we include it here for the reader’s convenience.This  lemma provides a characterization of Noetherian and Artinian graded modules over standard graded algebras.
\begin{lem}\label{kirby}(\cite[Theorem 1]{kirby})
Let $N$ be a graded $R$-module. Then
\begin{enumerate}
    \item  $N$ is Noetherian if and only if there exist integers $t$ and $s$ such that
    \begin{enumerate}
        \item  $ N_n= 0$ for all $n\le t$,
            \item $N_{n+1}= R_1N_n$ for all $n\geq s$ and
                \item $N_n$ is a finitely generated $R_0$-module for all $t\leq n\leq s$.
 \end{enumerate}

     \item $N$ is Artinian if and only if there exist integers $u$ and $v$ such that
     \begin{enumerate}
\item  $ N_n= 0$  for all $n\geq u$,
\item $0:_{N_n} R_1= 0$ for all $n\leq v$ and
\item $N_n$ is an Artinian $R_0$-module for all $v\leq n\leq u$.
 \end{enumerate}

  \end{enumerate}
\end{lem}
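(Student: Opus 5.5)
For part (1) I would argue directly with homogeneous generators, using only that $R$ is standard graded, so that $R_{m+1}=R_1R_m$ for all $m\geq 0$ and every $R_m$ is a finitely generated $R_0$-module.

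\emph{Part (1), necessity.} Suppose $N$ is Noetherian. Then $N$ is generated by finitely many homogeneous elements $x_1,\dots ,x_r$ of degrees $d_1,\dots ,d_r$. Put $t<\min_k d_k$ and $s:=\max_k d_k$.
\begin{enumerate}
\item Since $N_n=\sum_k R_{n-d_k}x_k$, we get $N_n=0$ for $n\leq t$, which is (a).
\item Each $N_n$ is a finitely generated $R_0$-module, since each $R_m$ is, which gives (c).
\item For $n\geq s$ every index $n-d_k$ is nonnegative. Hence $N_{n+1}=\sum_k R_1R_{n-d_k}x_k=R_1N_n$, which is (b).
\end{enumerate}

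\emph{Part (1), sufficiency.} Assume (a)--(c). Take finite sets of $R_0$-generators of $N_t,\dots ,N_s$; together they are finitely many homogeneous elements. By (a) nothing lies below degree $t$, and by (b) and induction on $n\geq s$ these elements generate every $N_n$ with $n>s$. So they generate $N$, and $N$ is Noetherian.

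\emph{Part (2), necessity.} Let $N$ be Artinian.
\begin{enumerate}
\item For (a), consider the descending chain of graded submodules $N_{\geq n}:=\bigoplus_{m\geq n}N_m$. It stabilizes, and the intersection of the whole chain is $0$. Therefore $N_{\geq u}=0$ for some $u$, i.e. $N_n=0$ for $n\geq u$.
\item For (c), fix $n$ and an $R_0$-submodule $L\subseteq N_n$. Then $(RL)_n=R_0L=L$, so a descending chain of $R_0$-submodules of $N_n$ gives a descending chain $RL$ of $R$-submodules of $N$. That chain stabilizes, and hence so do its degree-$n$ components. Thus each $N_n$ is an Artinian $R_0$-module.
\item For (b), let $K:=0:_NR_1=0:_NR_+$. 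This is a graded submodule on which $R$ acts through $R_0$, so it is an Artinian $R_0$-module equal to $\bigoplus_n K_n$ with $K_n=0:_{N_n}R_1$. An Artinian module cannot be a direct sum of infinitely many nonzero summands. Hence $K_n=0$ for $n\ll 0$, which is (b).
\end{enumerate}

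\emph{Part (2), sufficiency.} This is where I expect the main obstacle: arbitrary, not necessarily graded, descending chains must be controlled. I would use Melkersson's criterion (see \cite[7.1.2]{bsh}): an $\fa$-torsion module $N$ with $0:_N\fa$ Artinian is Artinian. Apply it with $\fa=R_+$.
\begin{enumerate}
\item $N$ is $R_+$-torsion. If $x\in N_m$ is homogeneous, then by (a) $R_+^kx\subseteq N_{\geq m+k}=0$ for $k\geq u-m$. Since every element is a finite sum of homogeneous ones, $N=\Gamma_{R_+}(N)$.
\item $0:_NR_+$ is Artinian. By (a) and (b), $0:_NR_+=0:_NR_1$ is contained in $\bigoplus_{v<n<u}N_n$. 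This is a finite direct sum of Artinian $R_0$-modules by (c), hence Artinian over $R_0$. Because $R_+$ kills $0:_NR_+$, the module is also Artinian over $R$.
\end{enumerate}
The criterion then shows $N$ is Artinian. If that reference cannot be quoted, I would prove it from scratch: embed $N$ into the $R_+$-torsion part of the injective hull of its socle-type submodule $0:_NR_+$, and check that this hull is Artinian.
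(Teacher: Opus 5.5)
Your proof is correct, and it follows a genuinely different route from the paper, because the paper gives no argument at all: it imports the statement verbatim from Kirby's Theorem 1.

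\textbf{The elementary directions.} Both halves of (1) and the necessity half of (2) follow from the standard grading alone. The facts you use are:
\begin{enumerate}
\item $R_{m+1}=R_1R_m$ for $m\geq 0$, and $R_+=RR_1$, so that $0:_NR_1=0:_NR_+$;
\item the chain $N_{\geq n}$ has zero intersection;
\item $(RL)_n=L$ for an $R_0$-submodule $L\subseteq N_n$;
\item an Artinian module cannot be a direct sum of infinitely many nonzero summands.
\end{enumerate}

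\textbf{The Artinian sufficiency.} The only substantial external input enters here, through Melkersson's criterion. You check its two hypotheses cleanly. Condition (a) makes $N$ an $R_+$-torsion module. Conditions (a), (b) and (c) together confine $0:_NR_+$ to the finitely many degrees $v<n<u$, where it is Artinian over $R_0$. It is therefore Artinian over $R$, since $R_+$ kills it. Your fallback sketch of the criterion is also sound: $0:_NR_+$ is essential in the $R_+$-torsion module $N$, and over a Noetherian ring the injective hull of an Artinian module is a finite direct sum of modules $E(R/\mathfrak{n})$ with $\mathfrak{n}$ maximal, hence Artinian.

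\textbf{What each approach buys.} Your route shows exactly where Noetherianity of $R$ is used: only in the two sufficiency statements. It also gives slightly more than the lemma asserts in the necessity directions, namely that every component $N_n$ is finitely generated, respectively Artinian, over $R_0$, not just those in the window between the two bounds. The paper's citation buys brevity but leaves the reader to consult Kirby for all of this.
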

In the following   theorem, which constitute one of the main results of this paper, we study the asymptotic behavior of the Bass numbers $\{\mu^i(\m_0, H^j_{R_+}(M)_n)\}_{n\in \Z}$, as $n\rightarrow -\infty$, where $(R_0,\m_0)$ is local. We show that these Bass numbers exhibit favorable properties in the case where
    $j\leq f_{R_+}(M)$.

\begin{thm}\label{thm1}
Assume that $(R_0,\m_0)$ is local and $R$ is a flat $R_0$-module via $R_0\hookrightarrow R$. Set $f:= f_{R_+}(M)$. Then,  for any  $l\leq f$, the following statements hold: 
\begin{enumerate}
\item 
 $\mu^{0}(\m_0, H^l_{R_+}(M)_n)=  \dim_{\tfrac{R_0}{\m_0}}(H^{l }_{R_+}(\tfrac{R}{\m_0R}, M)_n), \,\,\,\text{ for all}\,\, n\ll0.$
\item   $\mu^1(\m_0, H^l_{R_+}(M)_n)\leq \dim_{\tfrac{R_0}{\m_0}}(H^{l+ 1}_{R_+}(\tfrac{R}{\m_0R}, M)_n), \,\,\,\text{ for all}\,\, n\ll0.$
    \end{enumerate}
    \end{thm}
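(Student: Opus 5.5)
We plan to use the spectral sequence of Lemma \ref{spec}(\ref{spec1}) with $X=R/\m_0R$,
\[E_2^{i,j}=\Ext^i_R(\tfrac{R}{\m_0R}, H^j_{R_+}(M))\underset{i}{\Rightarrow} H^{i+j}_{R_+}(\tfrac{R}{\m_0R},M),\]
and then pass to the $n$-th graded component. Since $R$ is flat over $R_0$, Lemma \ref{2} gives, for every $n$,
\[(E_2^{i,j})_n\cong \Ext^i_{R_0}(\tfrac{R_0}{\m_0}, H^j_{R_+}(M)_n).\]
Because $H^j_{R_+}(M)_n$ is a finitely generated $R_0$-module and $R_0$ is local, $\dim_{R_0/\m_0}$ of the right-hand side is exactly $\mu^i(\m_0, H^j_{R_+}(M)_n)$. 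Taking graded components is exact, so the degree-$n$ strands form a spectral sequence of $R_0/\m_0$-vector spaces converging to $H^{i+j}_{R_+}(\tfrac{R}{\m_0R},M)_n$.

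The key step is to kill the rows $j<f$ in low degrees. For $j<f$, $H^j_{R_+}(M)$ is finitely generated. By Lemma \ref{kirby}(1), or directly by \cite[Theorem 16.1.5]{bsh}-type vanishing, this gives $H^j_{R_+}(M)_n=0$ for all $n\ll 0$. Since there are only finitely many $j<f$, we may choose one $n_0$ with $H^j_{R_+}(M)_n=0$ for all $j<f$ and all $n\le n_0$. Hence $(E_2^{i,j})_n=0$ for $j<f$ and $n\le n_0$. For $l<f$, statement (1) then reads $0=0$ provided we also show $H^l_{R_+}(R/\m_0R,M)_n=0$. Statement (2) is trivially true, since the left side is $0$.

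So the substantive case is $l=f$; if $f=\infty$, the same vanishing argument applies for each $l$. Fix $n\le n_0$. In degree $n$, the $E_2$ page has nonzero rows only for $j\ge f$ (respectively $j\ge l$). This is a first-quadrant situation with bottom row $j=l$.
\begin{itemize}
\item Total degree $l$: the only term is $E_2^{0,l}$. It receives no differentials, and its outgoing differentials land in rows $j<l$, which vanish. Thus $E_\infty^{0,l}=E_2^{0,l}\cong H^l_{R_+}(R/\m_0R,M)_n$, which gives (1).
\item Total degree $l+1$: the terms are $E_2^{1,l}$ and $E_2^{0,l+1}$. Again $E_2^{1,l}$ is not hit by any differential, because the sources would lie in rows $<l$ or in negative columns. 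Its outgoing differentials $d_r$ go to rows $l-r+1<l$, which are zero. So $E_\infty^{1,l}=E_2^{1,l}$ is a subquotient of $H^{l+1}_{R_+}(R/\m_0R,M)_n$ (it is a submodule in the filtration), and taking dimensions gives (2).
\end{itemize}
Equivalently, one could use the standard five-term-type exact sequence $0\to E_2^{1,l}\to H^{l+1}\to E_2^{0,l+1}$ for spectral sequences whose rows below $l$ vanish.

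The main obstacle is bookkeeping rather than depth. First, the spectral sequence of Lemma \ref{spec} must be homogeneous, so that it decomposes degreewise. Second, the vanishing for $j<f$ must hold uniformly in $n$, which is why we take $n$ smaller than all the finitely many thresholds. Third, we must check finiteness of the dimensions. Here $H^{l}_{R_+}(R/\m_0R,M)_n$ is a finitely generated $R_0$-module killed by a power of $\m_0$, because it is a limit of graded components of Ext modules of finitely generated modules. Alternatively, by the isomorphism in degree $l$ it is finite, since it equals $\Hom_{R_0}(R_0/\m_0, H^l_{R_+}(M)_n)$. In (2), if the right side is infinite, the inequality is trivial anyway.
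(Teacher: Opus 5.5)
Your proposal is correct and follows essentially the same route as the paper: the spectral sequence of Lemma \ref{spec}(\ref{spec1}) with $X=R/\m_0R$, Lemma \ref{2} to pass to degree-$n$ components, Lemma \ref{kirby} to kill the rows $j<l\le f$ for $n\ll 0$, and then the edge analysis in total degrees $l$ and $l+1$. Your uniform choice of $n_0$ and the five-term sequence only make explicit what the paper does implicitly. One small caveat: a direct limit of finitely generated modules need not be finitely generated, so your first justification that $H^{l}_{R_+}(R/\m_0R,M)_n$ is finite-dimensional is not valid as stated; nothing depends on it, however, since (1) is an isomorphism of vector spaces and (2) holds trivially when its right side is infinite.
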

\begin{proof}
 Let  $l\leq f$.
 \begin{enumerate}
        \item Using Lemma \ref{spec}(\ref{spec1}), there exists the following homogeneous convergence of spectral sequences
\begin{equation}\label{s1}
E_2^{i, j}= \Ext^i_{R }(\tfrac{R}{\m_0R}, H^j_{R_+}(M))\underset{i}{\Rightarrow}H^{i+ j}_{R_+}(\tfrac{R}{\m_0R}, M).
\end{equation}
    
     Let $i\in \NN_0$ and $k< f$. Then
 \begin{align*}\label{9}
    (E_2^{i, k})_n&=  \Ext^{i}_{R}(\tfrac{R}{\m_0R},H^k_{R_+}(M))_n \nonumber\\
    &\cong \Ext^{i}_{R_0}(\frac{R_0}{\m_0},H^k_{R_+}(M)_n)  &&\text{ (by Lemma \ref{2})}\nonumber\\
   &= 0 \,\,\,\,\,\,\,\text{for all}\,\,n\ll0  &&\text{(by\,\,Lemma \ref{kirby})}.
  \end{align*}
Therefore, 
\[ (E_{\infty}^{i, k})_n= 0,\,\,\,\,\,\,\,\,\,\,\,\,\,\,\,\text{for all}\,\,i\in \NN_0, k<f \,\,\text{and all}\,\,n\ll0.\]
In particular,
\begin{equation}\label{99}
    \Hom_{R_0}(\tfrac{R_0}{\m_0},H^l_{R_+}(M)_n)= (E_2^{0, l})_n= (E_{\infty}^{0, l})_n.
\end{equation}
By convergence of the spectral sequence, for all $n\ll0$, there   exists a filtration
\[0\subseteq \Phi_{l }^n\subseteq\cdots\subseteq  \Phi_{1}^n\subseteq \Phi_{0}^n= H^{  l }_{R_+}(\tfrac{R}{\m_0R}, M)_n, \]
of submodules of $ H^{ l}_{R_+}(\tfrac{R}{\m_0R}, M)_n$ such that
\begin{align*}\label{66}
&(E_{\infty}^{0, l})_n\cong \tfrac{\Phi_{ 0}^n}{\Phi_{1}^n}, \,\, \text{and}\\
& 0= (E_{\infty}^{i, k})_n\cong   \tfrac{\Phi_{ i}^n}{\Phi_{i+ 1}^n}\,\,\text{for all }\,\,i,k \,\text{with}\,\,i+ k= l \,\,\text{and}\,\,k< l.
\end{align*}
Hence, the filtration reduces to
\[0= \Phi_{l }^n=\cdots=  \Phi_{1}^n\subseteq \Phi_{0}^n= H^{  j }_{R_+}(\tfrac{R}{\m_0R}, M)_n.\]

Therefore, by \ref{99}, for all $n\ll0$,
\begin{align*}
    \Hom_{R_0}(\tfrac{R_0}{\m_0}, H^{l}_{R_+}(M)_n)&\cong \Hom _{R}(\tfrac{R}{\m_0R}, H^{l}_{R_+}(M))_n\\
    &= \Phi_{ 0}^n= H^{ l }_{R_+}(\tfrac{R}{\m_0R}, M)_n,
\end{align*}
  and the result follows.
     \item Using \ref{s1} and Lemma \ref{kirby}, for all $i\in \NN_0$, $k< l$ and all $n\ll0$ we have
\begin{align*}\label{10}
(E_{2}^{i, k})_n&= \Ext^{i}_{R}(\tfrac{R}{\m_0R}, H^{ k}_{R_+}(M))_n\\
&\cong \Ext^{i}_{R_0}(\tfrac{R_0}{\m_0}, H^{ k}_{R_+}(M)_n)\\
&=0= (E_{\infty}^{i, k})_n,   
\end{align*}
and hence
\begin{align*}
    \Ext^{1}_{R}(\tfrac{R}{\m_0R}, H^{ l}_{R_+}(M))_n&= (E_2^{1, l})_n\\
    &\cong  (E_{\infty}^{1, l})_n, \,\,\,\,\,\,\,\,\text{for all}\,\,n\ll0.
\end{align*}
 Again, by convergence of the spectral sequence, for all $n\ll0$, there   exists a filtration
\[0\subseteq \Phi_{l+ 1}^n\subseteq\cdots\subseteq \Phi_{   2}^n\subseteq\Phi_{1}^n\subseteq \Phi_{0}^n= H^{  l+ 1}_{R_+}(\tfrac{R}{\m_0R}, M)_n, \]
of submodules of $ H^{ l+ 1}_{R_+}(\tfrac{R}{\m_0R}, M)_n$ such that
\begin{align*}\label{66}
&(E_{\infty}^{1, l})_n\cong \tfrac{\Phi_{ 1}^n}{\Phi_{2}^n}, \,\, \text{and}\\
& 0= (E_{\infty}^{i, k})_n\cong   \tfrac{\Phi_{ i}^n}{\Phi_{i+ 1}^n}\,\,\text{for all }\,\,i,k,\,\text{with}\,\,i+ k= l+ 1\,\,\text{and}\,\,k< l.
\end{align*}
Hence, \[(E_{\infty}^{1, l})_n\cong \Phi_{ 1}^n\subseteq  H^{ l+ 1}_{R_+}(\tfrac{R}{\m_0R}, M)_n.\]
Therefore, for all $n\ll0$,
\begin{align*}
    \Ext^{ 1}_{R_0}(\tfrac{R_0}{\m_0}, H^{l}_{R_+}(M)_n)&\cong \Ext^{ 1}_{R}(\tfrac{R}{\m_0R}, H^{l}_{R_+}(M))_n\\
    &\cong \Phi_{ 1}^n\subseteq H^{ l+ 1}_{R_+}(\tfrac{R}{\m_0R}, M)_n.
\end{align*}
Consequently,
 \[\mu^{1}(\m_0, H^l_{R_+}(M)_{n})\leq \dim_{\frac{R_0}{\m_0}}(H^{ l+ 1}_{R_+}(\tfrac{R}{\m_0R}, M)_n), \,\,\,\,\,\,\,\,\text{for all}\,\,n\ll0.\]
\end{enumerate}
  \end{proof}  
Note that $f_{R_+}(M)\geq 1$. Therefore, the following corollary is an immediate consequence of the preceding theorem.
\begin{cor}
   Let $(R_0,\m_0)$ be local and $R$ is a flat $R_0$-module via $R_0\hookrightarrow R$. Then,   
\begin{enumerate}
\item 
 $\mu^{0}(\m_0, H^1_{R_+}(M)_n)=   \dim_{\tfrac{R_0}{\m_0}}(H^{1}_{R_+}(\tfrac{R}{\m_0R}, M)_n), \,\,\,\text{ for all}\,\, n\ll0.$
\item   $\mu^1(\m_0, H^1_{R_+}(M)_n)\leq \dim_{\tfrac{R_0}{\m_0}}(H^{2}_{R_+}(\tfrac{R}{\m_0R}, M)_n),\,\,\,\text{ for all}\,\, n\ll0.$
    \end{enumerate}
    
\end{cor}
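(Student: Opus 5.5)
The corollary is Theorem \ref{thm1} with $l=1$. The only thing to check is that $l=1$ is allowed there, that is, $1\leq f:=f_{R_+}(M)$. So the plan has two steps: show $f_{R_+}(M)\geq 1$, then substitute $l=1$ into parts (1) and (2) of Theorem \ref{thm1}.

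To see $f\geq 1$, it suffices that $H^0_{R_+}(M)$ is finitely generated. Indeed, $H^0_{R_+}(M)=\Gamma_{R_+}(M)$ is an $R$-submodule of $M$. Since $M$ is a finitely generated module over the Noetherian ring $R$, $M$ is Noetherian, so every submodule of it is finitely generated. Hence $H^0_{R_+}(M)$ is finitely generated, and by definition (\ref{deff}) the infimum defining $f_{R_+}(M)$ is taken over indices $i\geq 1$. If that set is empty, then $f=\infty$ and $f\geq 1$ holds trivially.

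With $f\geq 1$ established, the hypotheses of Theorem \ref{thm1} hold for $l=1$: $(R_0,\m_0)$ is local, $R$ is flat over $R_0$, and $l=1\leq f$. Part (1) of Theorem \ref{thm1} then gives
\[\mu^0(\m_0,H^1_{R_+}(M)_n)=\dim_{\tfrac{R_0}{\m_0}}(H^{1}_{R_+}(\tfrac{R}{\m_0R},M)_n)\quad\text{for all } n\ll 0.\]
Part (2) gives the inequality with $H^{2}_{R_+}(\tfrac{R}{\m_0R},M)_n$ on the right-hand side.

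There is no real obstacle. The only point needing care is the bound $f\geq 1$, which is exactly the Noetherian argument above.
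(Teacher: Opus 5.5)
Your proposal is correct and follows the paper's own route: the paper also observes that $f_{R_+}(M)\geq 1$ and reads the corollary off Theorem~\ref{thm1} with $l=1$. Your justification of $f_{R_+}(M)\geq 1$ (that $H^0_{R_+}(M)=\Gamma_{R_+}(M)$ is a submodule of the Noetherian module $M$, hence finitely generated) supplies the one-line reason the paper leaves implicit.
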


 \section{Bass numbers of $H^{\cd_{R_+}(M)}_{R_+}(M)_n$} 
Let $S$ be a ring. The  cohomological dimension
  of an $S$-module $X$ with respect to an ideal $\fa$ is defined by
\begin{equation}\label{defcd}
    \cd_{\fa}(X):= \sup\{i\in \NN_0\arrowvert H^i_{\fa}(X)\neq 0\}.
\end{equation}
Note that, in view of \cite[Corollary 3. 3. 3]{bsh}, if  $S$ is Noetherian, then $\cd_{\fa}(X)< \infty.$

The cohomological dimension is an important invariant  in local cohomology theory and has attracted considerable interest; see, for example, \cite{dnt, l} and \cite{o}.

Graded components of local cohomology modules, at the level of cohomological dimension  exhibit favorable asymptotic behavior. For example:
\begin{itemize}
  \item   (\cite[Corollary 3. 7]{b}) The set $\{\Supp_{R_0}(H^{\cd_{R_+}(M)}_{R_+}(M)_n)\}_{n\in \Z}$ is  decreasing; that is, 
         \[\Supp_{R_0}(H^{\cd_{R_+}(M)}_{R_+}(M)_{n- 1})\supseteq \Supp_{R_0}(H^{\cd_{R_+}(M)}_{R_+}(M)_n),\,\,\text{for all}\,\,n\in \Z.\]
     Moreover,  there exists $t\in \Z$ such that,  for all $n\leq t$,
     \[\Supp_{R_0}(H^{\cd_{R_+}(M)}_{R_+}(M)_n)= \{\p_0\in \Spec(R_0)| \dim_R(\kappa({\p_0})\otimes_{R_0} M)= \cd_{R_+}(M)\}.\]
     In other words, for all sufficiently small integers $n$,  the set $\Supp_{R_0}(H^{\cd_{R_+}(M)}_{R_+}(M)_n)$ depends only on $\Supp_R(M)$.
\end{itemize}
In this section we study the asymptotic behavior of the Bass numbers of the graded components  
  $H^i_{R_+}(M)_n$ as $n\rightarrow -\infty $ when $i=\cd_{R_+}(M)$. 
        
     The following lemma from \cite{dnt} will be used in the subsequent  theorem.
\begin{lem}\label{dnt}(\cite[Theorem 1.2]{dnt})
    Let $S$ be a ring, $\fa$ be an ideal of $S$ and $X$ and $Y$ be two finitely generated $S$-modules with
    $\Supp_S X \subseteq \Supp_S Y$. Then $\cd_{\fa}(X)\leq  \cd_{\fa}(Y)$. In particular, $\cd_{\fa}(X)= \cd_{\fa}(Y)$ whenever $\Supp_S X= \Supp_S Y$.
\end{lem}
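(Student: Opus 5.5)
The plan is to prove the stronger claim that, with $c:=\cd_{\fa}(Y)$, we have $H^i_{\fa}(N)=0$ for every $i>c$ and every finitely generated $S$-module $N$ with $\Supp_S N\subseteq \Supp_S Y$. Taking $N=X$ then gives $\cd_{\fa}(X)\leq \cd_{\fa}(Y)$. The equality statement follows by applying the inequality in both directions.

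I would prove the claim by descending induction on $i$. For the base case, let $\fa$ be generated by $r$ elements. Computing local cohomology with the \v{C}ech complex shows $H^i_{\fa}(N)=0$ for all $S$-modules $N$ and all $i>r$. So the claim holds trivially for $i>\max\{r,c\}$.

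For the inductive step, fix $i>c$ and assume $H^{i+1}_{\fa}(N')=0$ for all finitely generated $N'$ with $\Supp N'\subseteq \Supp Y$. The key input is Gruson's theorem: if $\Supp N\subseteq \Supp Y$ with $N,Y$ finitely generated, then $N$ has a finite filtration
\[0=N_0\subseteq N_1\subseteq \cdots\subseteq N_k=N\]
whose factors $N_j/N_{j-1}$ are homomorphic images of finite direct sums $Y^{t_j}$. Using the long exact sequences attached to $0\to N_{j-1}\to N_j\to N_j/N_{j-1}\to 0$, it suffices to treat the case where $N$ is a quotient of some $Y^t$.

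In that case, write $0\to K\to Y^t\to N\to 0$. Here $K$ is finitely generated with $\Supp K\subseteq \Supp Y$. The long exact sequence gives
\[H^i_{\fa}(Y^t)\to H^i_{\fa}(N)\to H^{i+1}_{\fa}(K).\]
The left term equals $H^i_{\fa}(Y)^{t}$, which is zero since $i>c$. The right term is zero by the induction hypothesis. Hence $H^i_{\fa}(N)=0$, completing the induction.

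The main obstacle is the filtration step, namely Gruson's theorem. If one prefers not to quote it, I would first try the following route. Induct via a prime filtration of $N$ to reduce to $N=S/\p$ with $\p\in\Supp Y$. Then note that $S/\p$ is a quotient of $Y/\p Y$, which has a nonzero map to $S/\p$ after replacing $Y$ by a suitable submodule, using $\Hom_{S_\p}(Y_\p,\kappa(\p))\neq 0$. The cokernel of that map has strictly smaller support, and this would be handled by a Noetherian induction on supports.
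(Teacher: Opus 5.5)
The paper gives no proof of this lemma and simply quotes \cite[Theorem 1.2]{dnt}. Your argument is correct and is essentially the proof given in that reference: descending induction on $i$ starting above the number of generators of $\fa$, with Gruson's filtration reducing to quotients $Y^t\to N$ and the sequence $H^i_{\fa}(Y^t)\to H^i_{\fa}(N)\to H^{i+1}_{\fa}(K)$. The only slip is in your optional fallback sketch: $S/\p$ need not be a quotient of $Y/\p Y$ (take $S=k[x,y]$, $\p=0$, $Y=(x,y)$), but the image $J/\p$ of a nonzero map $Y\to S/\p$ is a quotient of $Y$ with $\Supp_S(S/J)\subsetneq V(\p)$, which is exactly what the Noetherian induction needs.
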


In the following   theorem, which constitute one of the main results of this paper, we study the asymptotic behavior of the Bass numbers $\{\mu^i(\m_0, H^j_{R_+}(M)_n)\}_{n\in \Z}$, as $n\rightarrow -\infty$, where $(R_0,\m_0)$ is a regular local ring of dimension $d$. We show that these Bass numbers exhibit favorable properties in the case where
    $i\in \{d, d- 1\}$ and $j= \cd_{R_+}(M)$.

\begin{thm}\label{thm}
 
Assume that $(R_0,\m_0)$ is a regular local ring of dimension $d$ and $R$ is flat as an $R_0$-module
via $R_0\hookrightarrow R$. Set $c:= \cd_{R_+}(M)$. Then the following statements hold:
\begin{enumerate}

\item There exists a polynomial $p(x)$ of degree $< c$ such that
\[\mu^d(\m_0, H^c_{R_+}(M)_n)= p(n) \,\,\, \text{for  all} \,\, n\ll0.\]
In particular, if $c= 1$ then the sequence $\{\mu^d(\m_0,
H^1_{R_+}(M)_n)\}_{n\in \Z}$ of integers  is eventually constant; that is, there exists $\alpha\in \Z$  such that
 \[ \mu^d(\m_0, H^1_{R_+}(M)_n)=\alpha, \,\,\,\,\,\,\,\,\   \text{for all}\,\,n\ll0.\]

\item Suppose that $c> 0$ and   $\mu^d(\m_0, H^c_{R_+}(M))\neq 0$, then
$\mu^d(\m_0, M)\neq 0$ and the sequence $\{\mu^d(\m_0, H^c_{R_+}(M)_n)\}_{n\in \Z}$ is asymptotically decreasing. Moreover, if $c\geq 2$ it is asymptotically strictly decreasing.

\item If $H^c_{R_+}(\Ext^{d- 1}_{R }(\tfrac{R}{\m_0R}, M))$ is finitely generated then there exists a polynomial $p(x)\in \Q[x]$ of degree $< c-1$ such that
\[ \mu^{d- 1}(\m_0, H^c_{R_+}(M)_n)\leq  p(n), \,\,\, \text{for all}\,\, n\ll0.\]
If, in addition, $\Ext^{d}_{R }(\frac{R}{\m_0R}, H^{c- 1}_{R_+}(M))$ is finitely generated, then the equality holds.



\end{enumerate}
\end{thm}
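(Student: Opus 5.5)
The plan is to compare the two spectral sequences of Lemma \ref{spec} with $X=R/\m_0R$ and exploit the vanishing at the corners that regularity of $R_0$ forces. Put $k:=R_0/\m_0$. By Lemma \ref{2}, for every graded $R$-module $Y$ and all $i,n$,
\[\mu^i(\m_0,Y_n)=\dim_k \Ext^i_{R_0}(k,Y_n)=\dim_k \Ext^i_R(\tfrac{R}{\m_0R},Y)_n .\]
So everything reduces to computing graded pieces of the modules $\Ext^i_R(R/\m_0R,-)$.

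Since $R_0$ is regular of dimension $d$, $\m_0$ is generated by a regular sequence of length $d$. By flatness, its Koszul complex tensored with $R$ is a free resolution of $R/\m_0R$ of length $d$. Hence $\Ext^i_R(R/\m_0R,-)=0$ for $i>d$.

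For Lemma \ref{spec}(\ref{spec1}) we have $E_2^{i,j}=0$ if $i>d$ or $j>c$.

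For Lemma \ref{spec}(\ref{spec2}) we have $'E_2^{i,j}=H^i_{R_+}(\Ext^j_R(R/\m_0R,M))=0$ if $j>d$. It also vanishes if $i>c$: the modules $\Ext^j_R(R/\m_0R,M)$ are finitely generated with support in $\Supp M$, so Lemma \ref{dnt} gives $\cd_{R_+}\le c$.

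The corners $(d,c)$ and $(c,d)$ therefore admit no nonzero differentials, and
\[\Ext^d_R(\tfrac{R}{\m_0R},H^c_{R_+}(M))\cong H^{c+d}_{R_+}(\tfrac{R}{\m_0R},M)\cong H^c_{R_+}(N),\qquad N:=\Ext^d_R(\tfrac{R}{\m_0R},M),\]
as graded modules. Here $N$ is a finitely generated graded module over $R/\m_0R$, a standard graded $k$-algebra.

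\emph{Part (1).} This follows from the classical fact \cite[17.1.9]{bsh} that $\dim_k H^c_{R_+}(N)_n$ agrees, for $n\ll0$, with a polynomial of degree $<c$.

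\emph{Part (2).} If the Bass number is nonzero, then $H^c_{R_+}(N)\ne0$, so $N\ne0$. Lemma \ref{2} then gives $\Ext^d_{R_0}(k,M_n)\ne0$ for some $n$, hence $\mu^d(\m_0,M)\ne0$.

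For monotonicity, first replace $N$ by $N/\Gamma_{R_+}(N)$, which leaves $H^c_{R_+}$ unchanged since $c>0$. After passing to an infinite residue field by a faithfully flat extension $R_0\to R_0[X]_{\m_0R_0[X]}$, which preserves dimensions, choose $x\in R_1$ that is $N$-regular. Since $\dim N/xN<c$, we get $H^c_{R_+}(N/xN)=0$, and the exact sequence
\[H^{c-1}_{R_+}(N/xN)_n\to H^c_{R_+}(N)_{n-1}\xrightarrow{x}H^c_{R_+}(N)_n\to0\]
gives weak decrease. For $c\ge2$ we have $\dim N/xN=c-1\ge1$. Then $H^{c-1}_{R_+}(N/xN)_n\ne0$ for $n\ll0$, since its dimension is a nonzero polynomial of degree $c-2$. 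Combined with injectivity of the left map (because $H^{c-1}_{R_+}(N)_{n-1}\to H^{c-1}_{R_+}(N)_n$ factors appropriately, or more simply by comparing the degree-$(c-1)$ leading terms), this gives strict decrease. This strictness step is the most delicate part, and I expect to argue it via the polynomials of part (1): the difference of consecutive values is the Hilbert-type polynomial of $H^{c-1}_{R_+}(N/xN)$ minus that of a finite-length contribution.

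\emph{Part (3).} Look at total degree $c+d-1$.

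In the first spectral sequence, $E_2^{d-1,c}$ has zero incoming and outgoing differentials, since $E^{d-1-r,c+r-1}=0$ and $E^{d-1+r,c-r+1}=0$ for $r\ge2$. It is therefore a quotient of $H^{c+d-1}_{R_+}(R/\m_0R,M)$, with kernel $E_\infty^{d,c-1}$, which is a quotient of $E_2^{d,c-1}=\Ext^d_R(R/\m_0R,H^{c-1}_{R_+}(M))$.

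In the second spectral sequence, the terms of total degree $c+d-1$ are $H^c_{R_+}(\Ext^{d-1}_R(R/\m_0R,M))$ and $H^{c-1}_{R_+}(N)$, and the latter survives untouched. If the former is finitely generated, it vanishes in degrees $n\ll0$ by Lemma \ref{kirby}. Hence $H^{c+d-1}_{R_+}(R/\m_0R,M)_n\cong H^{c-1}_{R_+}(N)_n$ for $n\ll0$, whose $k$-dimension is a polynomial $p(n)$ of degree $<c-1$.

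This gives $\mu^{d-1}(\m_0,H^c_{R_+}(M)_n)\le p(n)$. If moreover $\Ext^d_R(R/\m_0R,H^{c-1}_{R_+}(M))$ is finitely generated, then $E_\infty^{d,c-1}$ vanishes in degrees $n\ll0$ by Lemma \ref{kirby}, and equality holds.
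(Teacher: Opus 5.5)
Your proof of parts (1) and (3) is correct and follows the paper's route. The same holds in (2) for the corner isomorphism $\Ext^d_R(R/\m_0R,H^c_{R_+}(M))\cong H^c_{R_+}(N)$, the conclusion $\mu^d(\m_0,M)\neq0$, and the weak inequality $\dim_kH^c_{R_+}(N)_{n-1}\geq\dim_kH^c_{R_+}(N)_n$. For (2) the paper instead quotes Brodmann's estimate $\mu(H^c_{R_+}(N)_{t-1})\geq\mu(H^c_{R_+}(N)_t)+c-1$ whenever $H^c_{R_+}(N)_t\neq0$ \cite[Corollary 2.4]{b}. Your general linear form is a valid self-contained substitute for the weak inequality, and even gives it for every $n$.

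\textbf{The gap is the strict decrease for $c\geq2$.} The kernel of the connecting map $H^{c-1}_{R_+}(N/xN)_n\to H^c_{R_+}(N)_{n-1}$ is the image of $H^{c-1}_{R_+}(N)_n$, namely $\bigl(H^{c-1}_{R_+}(N)/xH^{c-1}_{R_+}(N)\bigr)_n$. For an arbitrary $N$-regular $x$ this need not vanish for $n\ll0$, nor have finite length. Take $R_0=k$ (so $d=0$), $R=k[x_1,x_2,x_3]$, $M=N=(x_1,x_2)R$, $c=3$, $x=x_1$. Then $H^2_{R_+}(N)\cong H^1_{R_+}(k[x_3])$ is killed by $x_1$ and nonzero in every degree $n\leq-1$. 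So it injects into $H^2_{R_+}(N/x_1N)_n$, and the connecting map has a nonzero kernel in all negative degrees. Hence your claimed injectivity, and the ``finite-length contribution'', fail in general. Your fallback of comparing leading terms needs the polynomial $P$ with $P(n)=\dim_kH^c_{R_+}(N)_n$ ($n\ll0$) to have degree exactly $c-1$. Part (1) only gives $\deg P<c$, which still allows $P$ to be constant.

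To close the gap, either quote Brodmann's inequality as the paper does, or prove $\deg P=c-1$ when $\dim N=c$. For the latter, use graded local duality over a polynomial ring $S$ in $s$ variables mapping onto $R/\m_0R$. Then $H^c_{R_+}(N)_n$ is $k$-dual to $\Ext^{s-c}_S(N,S(-s))_{-n}$, and this module has Krull dimension $c$, so its Hilbert polynomial has degree $c-1$. Consequently $P(n-1)-P(n)$ is a nonzero polynomial when $c\geq2$, and your weak inequality forces it to be positive for $n\ll0$. Alternatively, choose $x$ also outside the finitely many non-maximal attached primes of the Artinian module $H^{c-1}_{R_+}(N)$. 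Then $H^{c-1}_{R_+}(N)/xH^{c-1}_{R_+}(N)$ has finite length, and the connecting map is injective for $n\ll0$. You would still need $H^{c-1}_{R_+}(N/xN)_n\neq0$ for all $n\ll0$, which your sketch only asserts; it also needs proof, e.g.\ by the same duality argument.
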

\begin{proof}

  The homomorphism $R_0\hookrightarrow R$ is   faithfully flat. Since $R_0$ is regular, we have
$$\pd_R(\tfrac{R}{\m_0R})= \pd_{R_0}(\tfrac{R_0}{\m_0})= \dim(R_0)= d.$$
\begin{enumerate}
\item Consider the   homogeneous convergence of spectral sequences
\begin{equation*} 
E_2^{i, j}= \Ext^i_{R }(\tfrac{R}{\m_0R}, H^j_{R_+}(M))\underset{i}{\Rightarrow}H^{i+ j}_{R_+}(\tfrac{R}{\m_0R}, M) \ \ \ \ \ (\ref{s1}).
\end{equation*}
Since \[\Ext^i_{R }(\tfrac{R}{\m_0R}, H^j_{R_+}(M))= 0, \,\,\text{for }\,\,i> d\,\,\text{ or}\,\,j> c,\] it follows that
\[E_{\infty}^{i, j}= E_2^{i, j}= 0, \,\,\, \text{ for} \,\, i> d\,\,  \text{or} \,\, j> c.\]
Hence, by convergence of the spectral sequence,
\begin{equation}\label{b}
 \Ext^d_{R }(\tfrac{R}{\m_0R}, H^c_{R_+}(M))= E_2^{d, c}= E_{\infty}^{d, c}\cong H^{d+ c}_{R_+}(\tfrac{R}{\m_0R}, M).
\end{equation}
On the other hand, by Lemma \ref{spec}(\ref{spec2}), there is a second homogeneous convergent spectral sequence:
\begin{equation}\label{s2}
E_2^{i, j}= H^i_{R_+}(\Ext^j_{R }(\tfrac{R}{\m_0R},M))\underset{i}{\Rightarrow}H^{i+ j}_{R_+}(\tfrac{R}{\m_0R}, M).
\end{equation}
By Lemma \ref{dnt}, $\cd_{R_+}(\Ext^j_{R }(\tfrac{R}{\m_0R},M))\leq c$, and hence  \[H^i_{R_+}(\Ext^j_{R }(\tfrac{R}{\m_0R},M))= 0, \,\,\text{for}\,\,i> c \,\,\text{or}\,\,j>d. \]  Thus
\begin{equation}\label{d}
    E_{\infty}^{i, j}= E_2^{i, j}= 0, \,\,\,  \text{for} \,\, i> c\,\,  \text{or} \,\, j> d,
\end{equation}
and therefore,
\begin{equation}\label{bb}
H^{d+ c}_{R_+}(\tfrac{R}{\m_0R}, M)\cong E_{\infty}^{d, c}= E_2^{d, c}=  H^c_{R_+}(\Ext^d_{R }(\tfrac{R}{\m_0R},M)).
\end{equation}
Combining \ref{b}, \ref{bb} and Lemma \ref{2}, we obtain, for any $n\in \Z$,
\begin{align}\label{isomorph}
    H^c_{R_+}(\Ext^d_{R }(\tfrac{R}{\m_0R},M))_n&\cong \Ext^d_{R }(\tfrac{R}{\m_0R}, H^c_{R_+}(M))_n \nonumber\\
    &\cong \Ext^d_{R_0 }(\tfrac{R_0}{\m_0}, H^c_{R_+}(M)_n).
\end{align}
Let $\mu(X)$ denote the minimum number of generators of an
$R_0$-module $X$. It is well-known that $\mu(X)= \dim_{\tfrac{R_0}{\m_0}}(\tfrac{X}{\m_0X})$. Thus, \ref{isomorph} yields
\begin{align}\label{m}
    \mu^d(\m_0, H^c_{R_+}(M)_n)&=
\dim_{\tfrac{R_0}{\m_0}}(H^c_{R_+}(\Ext^d_{R
}(\tfrac{R}{\m_0R},M))_n)  \nonumber\\
&= \mu(H^c_{R_+}(\Ext^d_{R }(\tfrac{R}{\m_0R},M))_n).
\end{align}
By Lemma \ref{dnt}, $\cd_{R_+}(\Ext^d_{R }(\tfrac{R}{\m_0R}, M))\leq  \cd_{R_+}(M)= c.$

 If $\cd_{R_+}(\Ext^d_{R }(\tfrac{R}{\m_0R},M))< c,$ Then
\[0= \mu(H^c_{R_+}(\Ext^d_{R }(\tfrac{R}{\m_0R}, M))_n)= \mu^d(\m_0, H^c_{R_+}(M)_n), \,\,\text{for all}\, n\in \Z.\]
Otherwise, if equality holds, then by \cite[Corollary 2. 4]{b}, there exists a polynomial
$p(x)\in \Q[x]$ of degree $< c$ such that
\[\mu^d(\m_0, H^c_{R_+}(M)_n)= p(n), \,\,\, \text{for  all}\,\, n\ll0.\]
In particular, if $c= 1$ then $p(x)= k\in \Q$, is constant, and hence the sequence $\{\mu^d(\m_0,
H^1_{R_+}(M)_n)\}_{n\in \Z}$ is eventually constant.

\item  Assume that $c> 0$ and  $\mu^d(\m_0, H^c_{R_+}(M))\neq 0$. Since
\begin{align}
\mu^d(\m_0, H^c_{R_+}(M))&= \dim_{\tfrac{R_0}{\m_0}}(\Ext^d_{R_0}(\tfrac{R_0}{\m_0}, H^c_{R_+}(M))) \nonumber\\
&= \sum_{ n\in \Z}\dim_{\tfrac{R_0}{\m_0}}(\Ext^d_{R_0}(\tfrac{R_0}{\m_0}, H^c_{R_+}(M)_n)),   &&\text{ (by Lemma \ref{rem}(\ref{limm}))} \nonumber
\end{align}
 there exists $t\in \Z$ such that
$\mu^d(\m_0, H^c_{R_+}(M)_t)\neq 0$. By \ref{isomorph}, this implies that $\Ext^d_R(\frac{R}{\m_0R},M)\neq 0$ and hence $\mu^d(\m_0, M)\neq 0$.  Again by \ref{isomorph}, we obtain
\[H^c_{R_+}(Ext^d_{R
}(\tfrac{R}{\m_0R},M))_t\neq 0.\]
 Therefore, in view of Lemma \ref{dnt}, $\cd_{R_+}(\Ext^d_{R }(\tfrac{R}{\m_0R},M))= c> 0$.

Now, by  \cite[Corollary 2.4]{b} and \ref{isomorph}, we have
\begin{align}\label{3}
  \mu^d(\m_0, H^c_{R_+}(M)_{t- 1}) & = \mu(H^c_{R_+}(\Ext^d_{R
}(\frac{R}{\m_0R},M))_{t- 1}) \nonumber\\
&\geq \mu(H^c_{R_+}(\Ext^d_{R
}(\tfrac{R}{\m_0R},M))_{t})+ c-1  \nonumber \\
&= \mu^d(\m_0, H^c_{R_+}(M)_{t})+ c- 1    \nonumber \\
&\geq  \mu^d(\m_0, H^c_{R_+}(M)_{t})> 0.
 \end{align}
Therefore, the sequence $\{\mu^d(\m_0, H^c_{R_+}(M)_n)\}_{n\in \Z}$ is   decreasing for all $n\leq t$. Moreover, if $c\geq 2$, then  it is   strictly decreasing.

\item Assume that $H^c_{R_+}(Ext^{d- 1}_{R
}(\tfrac{R}{\m_0R},M))$ is finitely generated. Then, by Lemma \ref{kirby},
\begin{equation}\label{4}
  H^c_{R_+}(\Ext^{d- 1}_{R}(\tfrac{R}{\m_0R},M))_n =0, \,\,\,\,\,\,\,\,\,\, \text{ for all}\,\, n\ll0.
\end{equation}
From  the convergence of spectral sequences \ref{s2}, we obtain
\[ H^{c- 1}_{R_+}(\Ext^{d}_{R}(\tfrac{R}{\m_0R},M))_n =(E_2^{c- 1, d})_n\cong (E_{\infty}^{c- 1, d})_n, \,\,\,\,\,\,\,\,\,\text{for all}\,\, n\ll0.\]
Hence, for all $n\ll0$, there exists a filtration
\[0\subseteq \Phi_{d+ c- 1}^n\subseteq \cdots\subseteq\Phi_{ c+ 1}^n\subseteq\Phi_{ c}^n\subseteq\Phi_{c- 1}^n\subseteq\cdots \subseteq\Phi_{0}^n= H^{d+ c- 1}_{R_+}(\tfrac{R}{\m_0R}, M)_n, \]
of submodules of $ H^{d+ c- 1}_{R_+}(\frac{R}{\m_0R}, M)_n$ such that
\begin{equation}\label{5}
  H^{c- 1}_{R_+}(\Ext^{d}_{R}(\tfrac{R}{\m_0R},M))_n\cong \tfrac{\Phi^n_{c- 1}}{\Phi^n_{c}}.
\end{equation}
By the vanishing of all other $E_{\infty}^{i, j}$-terms with  $i+ j= d+ c- 1$ and $i\neq c- 1$, the filtration collapses, for all $n\ll 0$, and we obtain
\begin{equation}\label{6}
  H^{c- 1}_{R_+}(\Ext^{d}_{R}(\tfrac{R}{\m_0R},M))_n\cong H^{d + c- 1}_{R_+}(\tfrac{R}{\m_0R}, M)_n,\,\,\,\,\,\,\,\,\,\,\,\,\text{ for all}\,\, n\ll0.
\end{equation}
Now, consider the convergence of spectral sequences \ref{s1}. Since
\[E_2^{i, j}= \Ext^{i}_{R}(\tfrac{R}{\m_0R}, H^{j}_{R_+}(M))=0, \,\,\text{for}\,\, i>d \,\,\text{or} \,\,\ j>c,\]
we have
\[ \Ext^{d- 1}_{R}(\tfrac{R}{\m_0R}, H^{c}_{R_+}(M))= E_2^{d- 1, c}= E_{\infty}^{d- 1, c}.\]
Hence,   there exists a filtration  
\[0\subseteq \Phi_{d+ c- 1}\subseteq \cdots\subseteq\Phi_{ d+ 1}\subseteq\Phi_{d}\subseteq\Phi_{d- 1}\subseteq\cdots \subseteq\Phi_{0}= H^{d+ c- 1}_{R_+}(\tfrac{R}{\m_0R}, M), \]
of submodules of $ H^{d+ c- 1}_{R_+}(\tfrac{R}{\m_0R}, M)$ such that
\begin{equation}\label{66}
 \Ext^{d- 1}_{R}(\tfrac{R}{\m_0R}, H^{c}_{R_+}(M))\cong \tfrac{\Phi_{d- 1}}{\Phi_{d}}.
\end{equation}
Since for all  $0\leq i, j\leq d+ c- 1$   with $i+ j= d+ c- 1$  and $i\neq d- 1, d$,
\[0= \Ext^{i}_{R}(\tfrac{R}{\m_0R}, H^{j}_{R_+}(M))= E_2^{i, j}=  E_{\infty}^{i, j}= \tfrac{\Phi_{i}}{\Phi_{i+ 1}},\]
the above filtration reduces to
\[0= \Phi_{d+ c- 1}= \cdots=\Phi_{ d+ 1}\subseteq\Phi_{ d}\subseteq\Phi_{d- 1}=\cdots =\Phi_{0}= H^{d+ c- 1}_{R_+}(\tfrac{R}{\m_0R}, M).\]
Therefore, \ref{66} implies
\begin{equation}\label{7}
  \Ext^{d- 1}_{R}(\tfrac{R}{\m_0R}, H^{c}_{R_+}(M))\cong  \tfrac{H^{d+ c- 1}_{R_+}(\tfrac{R}{\m_0R}, M)}{\Phi_{ d}}.
\end{equation}
Moreover, by \ref{6}, for all $n\ll 0$ we obtain
\begin{align}\label{8}
\mu^{d- 1}(\m_0, H^c_{R_+}(M)_{n})&\leq \dim_{\tfrac{R_0}{\m_0}}(H^{d+ c- 1}_{R_+}(\tfrac{R}{\m_0R}, M)_n) \nonumber \\
&= \dim_{\tfrac{R_0}{\m_0}}(H^{c- 1}_{R_+}(\Ext^{d}_{R}(\tfrac{R}{\m_0R},M))_n).
\end{align}
Now, consider the standard graded ring ${R^\prime}:= \tfrac{R_0}{\m_0}\bigotimes_{R_0}R$ with base ring $\tfrac{R_0}{\m_0}$. By the Independence Theorem \cite[Theorem 14. 1. 7]{bsh}, there is a homogeneous isomorphism
\[H^{c- 1}_{R_+}(\Ext^{d}_{R}(\tfrac{R}{\m_0R},M))\cong H^{c- 1}_{R^{\prime}_{+}}(\Ext^{d}_{R}(\tfrac{R}{\m_0R},M)).\]
Hence, by \cite[Theorem 17.1.11]{bsh}, there exists a polynomial $p(x)\in \Q[x]$ of degree $< c- 1$ such that
\[\dim_{\frac{R_0}{\m_0}}(H^{c- 1}_{R_+}(\Ext^{d}_{R}(\tfrac{R}{\m_0R},M))_n)= p(n),\,\,\,\,\,\,\, \text{for all}\,\,n\ll0.\]
Therefore, \ref{8} implies that
\[\mu^{d- 1}(\m_0, H^c_{R_+}(M)_{n})\leq p(n),\,\,\,\,\,\,\, \text{for all}\,\,n\ll0,\]
as desired.

Finally, assume in addition that $\Ext^{d}_{R }(\tfrac{R}{\m_0R}, H^{c- 1}_{R_+}(M))$ is a finitely generated $R$-module. Then, by lemmas \ref{kirby} and \ref{2},
\[  \Ext^{d}_{R_0 }(\tfrac{R_0}{\m_0}, H^{c- 1}_{R_+}(M)_n)= 0, \,\,\,\,\,\text{for all }\,\,n\ll0.\]
Using again the spectral sequences \ref{s1} we obtain
\begin{align}
\Ext^{d- 1}_{R_0}(\tfrac{R_0}{\m_0}, H^{c}_{R_+}(M)_n)= (E_2^{d- 1, c})_n&= (E_{\infty}^{d- 1, c})_n \nonumber \\
&\cong H^{d+ c- 1}_{R_+}(\tfrac{R}{\m_0R}, M)_n.
\end{align}
Therefore, the only inequality in \ref{8} is in fact an  equality and the result follows using the same argument as used above.

\end{enumerate}
\end{proof}
Note that, in view of Lemma \ref{dnt}, for all $i\in \NN_0$,
\[\cd_{R_+}(\Ext^{i}_{R }(\tfrac{R}{\m_0R}, M))\leq \cd_{R_+}(M).\]
So, the condition that $H^c_{R_+}(\Ext^{d- 1}_{R }(\tfrac{R}{\m_0R}, M))$ is finitely generated in the item 3 of the above theorem is not very restrictive.

The following corollary is immediate by the above theorems.
\begin{cor}
  Assume that  $R$ is flat as an $R_0$-module
via $R_0\hookrightarrow R$, let $\p_0\in \Spec(R_0)$ and  set $c:= \cd_{R_+}(M)$  and $f:= f_{R_+}(M)$. Then the following statements hold.
\begin{enumerate}
\item If $l\leq f$, then  
\[ \mu^{0}(\p_0, H^l_{R_+}(M)_n)= \dim_{\kappa{(\p_0)}}((H^{l }_{R_+}(\tfrac{R}{\p_0R}, M)_n)_{\p_0}),\,\,\,\text{ for all}\,\, n\ll0.\]
Moreover, 
 \[\mu^1(\p_0, H^l_{R_+}(M)_n)\leq \dim_{\kappa{(\p_0)}}((H^{l+ 1}_{R_+}(\tfrac{R}{\p_0R}, M)_n)_{\p_0}),\,\,\,\text{ for all}\,\, n\ll0.\]
\item Assume that $R_0$ is regular  and set $p:= \hei(\p_0)$. Then
\begin{enumerate}
 \item There exists a polynomial $p(x)$ of degree $< c$ such that
\[\mu^p(\p_0, H^c_{R_+}(M)_n)= p(n) \,\,\, \text{for  all} \,\, n\ll0.\]
In particular, if $c= 1$ then the sequence $\{\mu^p(\p_0,
H^1_{R_+}(M)_n)\}_{n\in \Z}$  is eventually constant.

\item    If 
 $c> 0$ and   $\mu^p(\p_0, H^c_{R_+}(M))\neq 0$, then
$\mu^p(\p_0, M)\neq 0$ and the sequence $\{\mu^p(\p_0, H^c_{R_+}(M)_n)\}_{n\in \Z}$ is asymptotically decreasing.
 Moreover, if $c\geq 2$ it is asymptotically strictly decreasing.

\item If $H^c_{R_+}(Ext^{p- 1}_{R }(\frac{R}{\p_0R}, M))$ is finitely generated, then there exists a polynomial $p(x)\in \Q[x]$ of degree $< c-1$ such that
\[ \mu^{p- 1}(\p_0, H^c_{R_+}(M)_n)\leq  p(n) \,\,\, \text{for  all}\,\, n\ll0.\]
If, in addition, $Ext^{p}_{R }(\frac{R}{\p_0R}, H^{c- 1}_{R_+}(M))$ is finitely generated then the equality holds.


   
\end{enumerate}
\end{enumerate}
\end{cor}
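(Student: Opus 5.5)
The plan is to reduce the corollary to Theorems \ref{thm1} and \ref{thm} by localizing at $\p_0$. Set $S:= R_{\p_0}:= (R_0\setminus\p_0)^{-1}R$. This is a standard graded ring with base ring $(R_0)_{\p_0}$, which is local with maximal ideal $\p_0(R_0)_{\p_0}$. Flatness of $R$ over $R_0$ localizes, so $S$ is flat over $(R_0)_{\p_0}$. If $R_0$ is regular, then $(R_0)_{\p_0}$ is regular local of dimension $p=\hei(\p_0)$. The module $M_{\p_0}$ is a finitely generated graded $S$-module.

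The first step is to record the compatibilities of localization with respect to the multiplicative set $R_0\setminus\p_0$, which consists of degree-zero elements.
\begin{itemize}
\item Localization is exact and commutes with taking graded components. Hence $(H^j_{R_+}(M)_n)_{\p_0}\cong H^j_{S_+}(M_{\p_0})_n$, by flat base change for local cohomology, since $R_+S=S_+$.
\item $\Ext$ of finitely generated modules localizes: $\Ext^i_{R_0}(R_0/\p_0,X)_{\p_0}\cong\Ext^i_{(R_0)_{\p_0}}(\kappa(\p_0),X_{\p_0})$. Therefore $\mu^i(\p_0,X)=\mu^i(\p_0(R_0)_{\p_0},X_{\p_0})$ for every $R_0$-module $X$.
\item Generalized local cohomology localizes: $H^j_{R_+}(R/\p_0R,M)_{\p_0}\cong H^j_{S_+}(S/\p_0S,M_{\p_0})$. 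Since $R/\p_0R$ is finitely generated, the $\Ext$ in the direct limit commutes with localization, and so does the direct limit.
\end{itemize}

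Next I need the invariants to behave under this localization. For $\cd$, localization gives $\cd_{S_+}(M_{\p_0})\le c$. This may be strict, but if it is strict then $H^c_{R_+}(M)_{\p_0}=0$, all the relevant Bass numbers vanish, and the statements in (2)(a),(b) hold trivially with the zero polynomial (the hypothesis of (b) fails). In (2)(c), when $\cd$ drops, the left side is $0$ and the inequality is fine. For equality in that case, I would instead apply the argument of Theorem \ref{thm}(3) directly with $c$ in place of $\cd$: that proof only uses vanishing of $H^j$ for $j>c$, not that $c$ is attained, so I would point out that the theorem holds with $c$ replaced by any upper bound.

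For $f$, localization can only increase the finiteness dimension, because a localization of a finitely generated module is finitely generated. So $l\le f\le f_{S_+}(M_{\p_0})$, and Theorem \ref{thm1} applies to $M_{\p_0}$.

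Finally, the hypotheses of (2)(c) transfer to $S$. Finite generation of $H^c_{R_+}(\Ext^{p-1}_R(R/\p_0R,M))$ and of $\Ext^p_R(R/\p_0R,H^{c-1}_{R_+}(M))$ passes to their localizations, which are the corresponding modules over $S$ by the compatibilities above. Applying Theorem \ref{thm1} and Theorem \ref{thm} to $(S,M_{\p_0})$ and translating back gives each item. The dimension over $(R_0)_{\p_0}/\p_0(R_0)_{\p_0}=\kappa(\p_0)$ of $(H^l_{R_+}(R/\p_0R,M)_n)_{\p_0}$ is exactly the right-hand side of (1).

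The main obstacle is bookkeeping rather than mathematics: I must verify that $\cd$ and $f$ behave correctly under localization, and handle the degenerate case where $\cd$ drops, which is why I make the remark about replacing $c$ by an upper bound.
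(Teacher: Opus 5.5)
Your proposal is correct and follows the paper's route. The paper's $R'=(R_0)_{\p_0}\otimes_{R_0}R$ is your $S$: localize at $R_0\setminus\p_0$, use graded flat base change and the localization of Bass numbers, note that $\cd$ can only drop and $f$ can only rise, and apply Theorems \ref{thm1} and \ref{thm}. Your explicit treatment of the case $\cd_{S_+}(M_{\p_0})<c$ (trivial vanishing, plus the correct observation that the proof of Theorem \ref{thm}(3) uses $c$ only as an upper bound) fills in a point that the paper passes over with the bare inequality $\cd_{R'_+}(M')\le\cd_{R_+}(M)$.
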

\begin{proof}
    Consider the flat extension $R^{\prime}:= (R_{0})_{\p_0}\otimes_{R_{0}}R$ of $R$ and let $M^{\prime}:= M\otimes_R R^{\prime}$. Then $R^{\prime}$ is a standard graded ring with local base ring $((R_{0})_{\p_0}, \p_0(R_{0})_{\p_0})$.
    
    Moreover, in view of the graded Flat Base Change Theorem \cite[Theorem 14. 1. 9]{bsh}, for any $i\in \NN_0$  we have
    \begin{align*}
      (H^i_{R_+}(M)_n)_{\p_0} &\cong  (H^i_{R_+}(M)_{\p_0})_n \\
      &\cong  (H^i_{R_+}(M)\otimes_{R_0} (R_{0})_{\p_0})_n \\
      &\cong (H^i_{R_+}(M)\otimes_{R } R^{\prime} )_n\\
      &\cong H^i_{R^{\prime}_+}(M^{\prime})_n.  
    \end{align*}
    Therefore, for any $i,j\in \NN_0$ we obtain
    \begin{align*}
        \mu^{i}(\p_0, H^j_{R_+}(M)_{n})&= \dim_{\kappa(\p_0)}(\Ext^i_{(R_0)_{\p_0}}(\kappa(\p_0), (H^j_{R_+}(M)_n)_{\p_0}))\\
        &= \dim_{{\frac{R^{\prime}_0}{\p_0R^{\prime}_0}}}(\Ext^i_{R^{\prime}_0}(\tfrac{R^{\prime}_0}{\p_0R^{\prime}_0}, H^j_{R^{\prime}_+}(M^{\prime})_n)).
    \end{align*}
    
    In addition, it is straightforward to verify that
    $\cd_{R^{\prime}_+}(M^{\prime})\leq \cd_{R_+}(M)$, $f_{R_+}(M)\leq f_{R^{\prime}_+}(M^{\prime})$ and that $\pd_{R^\prime}(\tfrac{R^{\prime}}{\p_0R^{\prime}})\leq \pd_R(\tfrac{R}{\p_0R})$.

    Finally,  if $R$ is flat over $R_0$, then $R^{\prime}$ is flat as an $R^{\prime}_0$-module via $R^{\prime}_0\hookrightarrow R^{\prime}$.
    Now, the results follow from the Theorems \ref{thm1} and \ref{thm}.
\end{proof}

\begin{rem}
    \begin{enumerate}
        \item In view of \cite[Lemma 3. 8]{b}, for any $i\in \NN_0$,
        \[\Ass_R(H^i_{R_+}(M))= \bigcup_{n\in \Z}\{\p_0+ R_+\arrowvert \p_0\in \Ass_{R_0}(H^i_{R_+}(M)_n)\}.\]
        Therefore,
        \[ E_R(H^i_{R_+}(M))=\oplus_{n\in \Z}\oplus_{\p_0\in \Ass_{R_0}(H^i_{R_+}(M)_n)}E_R(\tfrac{R}{\p_0+ R_+})^{\mu^0(\p_0+ R_+, H^i_{R_+}(M))}.\]
        Also, for all $n\in \Z,$
        \[E_{R_0}(H^i_{R_+}(M)_n)=  \oplus_{\p_0\in \Ass_{R_0}(H^i_{R_+}(M)_n)}  E_{R_0}(\tfrac{R_0}{\p_0})^{\mu^0(\p_0, H^i_{R_+}(M)_n)}.\]
        In other words,
       \begin{equation*}
           \mu^0(\p_0+ R_+, H^i_{R_+}(M))> 0  
            \Leftrightarrow\exists n\in \Z \,\,\text{such that }\,\,\mu^0(\p_0, H^i_{R_+}(M)_n)> 0.
       \end{equation*}
       \item
       For any $i, j \in \NN_0$ and $\p_0\in \Spec(R_0)$ we have
       \begin{align}
            \mu^i(\p_0, H^j_{R_+}(M))&= \dim_{\kappa(\p_0)}(\Ext^i_{(R_0)_{\p_0}}(\kappa(\p_0), H^j_{R_+}(M)_{\p_0}))\nonumber\\
       & =\sum _{n\in \Z}\dim_{\kappa(\p_0)}(\Ext^i_{(R_0)_{\p_0}}(\kappa(\p_0), (H^j_{R_+}(M)_n)_{\p_0}))\nonumber&&\text{(by \ref{rem}(\ref{limm}))}\\
       & =  \sum_{n\in \Z}  \mu^i(\p_0, H^j_{R_+}(M)_n).
       \end{align}
       Therefore, by \cite[Theorem 16. 1. 5]{bsh},
       \begin{equation}\label{finite}
           \mu^i(\p_0, H^j_{R_+}(M))< \infty\Longleftrightarrow \mu^i(\p_0, H^j_{R_+}(M)_n)=0 \,\,\text{for all }\,\,n\ll 0.
       \end{equation} 
    \end{enumerate}
\end{rem}
We recall from \cite{jr} that if $S$ is a ring and $\fa$ is an ideal of $S$ then a non-zero finitely generated $S$-module $X$ is said to be relative Cohen-Macaulay with respect to $\fa$ of degree $c$ if 
\begin{equation}\label{defrcm}
    \grade_{\fa}(X)= \cd_{\fa}(X)=c,
\end{equation}  
 where $\grade_{\fa}(X)$ denotes the length of a maximal $X$-regular sequence in $\fa$. Equivalently,  $X$ is said to be relative Cohen-Macaulay with respect to $\fa$ of degree $c$ if $H^i_{\fa}(X)= 0$ for any $i\neq c$ (see \cite[Theorem 6. 2. 7]{bsh}).

The following corollary, consider the Bass numbers of local cohomology modules of relative Cohen-Macaulay modules.

\begin{cor}\label{rcm}
    Assume that  $(R_0, \m_0)$ is local,  $M$ is relative Cohen-Macaulay with respect to $R_+$ of degree $c$ and $R$ is flat as an $R_0$-module
via $R_0\hookrightarrow R$. Then   the following statements holds.
\begin{enumerate}
    \item For any $i\in \NN_0$ and  $n\in \Z$, $\mu^i(\m_0, H^c_{R_+}(M)_n)=  \dim_{\tfrac{R_0}{\m_0}}(H^{i+ c }_{R_+}(\tfrac{R}{\m_0R}, M)_n).$
 \item Let  $(R_0, \m_0)$ be regular of dimension $d$. Then,
 \begin{enumerate}
     \item There exists a polynomial $p(x)\in \Q[x]$ of degree $< c- 1$ such that 
     \[\mu^{d- 1}(\m_0, H^c_{R_+}(M)_n)\geq p(n),\,\,\,\,\,\,\,\,\text{for all}\,\,n\ll 0.\]
     \item If $H^{c- 1}_{R_+}(\Ext^d_R(\tfrac{R}{\m_0R}, M))$ is not finitely generated, then 
     \[\mu^{d- 1}(\m_0, H^c_{R_+}(M)_n)> 0,\,\,\,\,\,\,\,\,\text{for all}\,\,n\ll 0.\]
     Therefore, $\mu^{d- 1}(\m_0, H^c_{R_+}(M))$ is not finite.
 \end{enumerate}
\end{enumerate}
\end{cor}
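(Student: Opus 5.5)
For part (1) the plan is to let the relative Cohen--Macaulay hypothesis collapse the first spectral sequence. Since $M$ is relative Cohen--Macaulay of degree $c$, $H^j_{R_+}(M)=0$ for all $j\neq c$. Hence in the homogeneous spectral sequence (\ref{s1}) of Lemma \ref{spec}(\ref{spec1}), namely $E_2^{i,j}=\Ext^i_R(\tfrac{R}{\m_0R},H^j_{R_+}(M))\Rightarrow H^{i+j}_{R_+}(\tfrac{R}{\m_0R},M)$, only the row $j=c$ is nonzero. The sequence therefore degenerates at $E_2$, and for every $i$ we get a homogeneous isomorphism
\[\Ext^i_R(\tfrac{R}{\m_0R},H^c_{R_+}(M))\cong H^{i+c}_{R_+}(\tfrac{R}{\m_0R},M).\]
Taking the $n$-th graded component and applying Lemma \ref{2} (this uses flatness) gives $\Ext^i_{R_0}(\tfrac{R_0}{\m_0},H^c_{R_+}(M)_n)\cong H^{i+c}_{R_+}(\tfrac{R}{\m_0R},M)_n$ for every $n\in\Z$. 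Taking $\tfrac{R_0}{\m_0}$-dimensions yields (1); no $n\ll0$ restriction is needed.

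For (2)(a) I would first apply (1) with $i=d-1$, which gives $\mu^{d-1}(\m_0,H^c_{R_+}(M)_n)=\dim(H^{d+c-1}_{R_+}(\tfrac{R}{\m_0R},M)_n)$. I would then bound the right-hand side from below using the second spectral sequence (\ref{s2}), $E_2^{i,j}=H^i_{R_+}(\Ext^j_R(\tfrac{R}{\m_0R},M))$. As in the proof of Theorem \ref{thm}, these terms vanish for $i>c$ (Lemma \ref{dnt}) and for $j>d$ (because $\pd_R(\tfrac{R}{\m_0R})=d$). Consider the position $(c-1,d)$:
\begin{itemize}
\item every outgoing differential $d_r$ with $r\geq2$ lands in column $c-1+r>c$, which is zero;
\item every incoming differential $d_r$ comes from row $d+r-1>d$, which is zero.
\end{itemize}
Hence $E_\infty^{c-1,d}=E_2^{c-1,d}$, and this is a subquotient of $H^{d+c-1}_{R_+}(\tfrac{R}{\m_0R},M)$ in each degree. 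This gives
\[\mu^{d-1}(\m_0,H^c_{R_+}(M)_n)\geq \dim(H^{c-1}_{R_+}(\Ext^d_R(\tfrac{R}{\m_0R},M))_n).\]
Since $\Ext^d_R(\tfrac{R}{\m_0R},M)$ is annihilated by $\m_0R$, it is a finitely generated graded module over $R':=\tfrac{R_0}{\m_0}\otimes_{R_0}R$. So, exactly as in Theorem \ref{thm}(3), the Independence Theorem together with \cite[Theorem 17.1.11]{bsh} shows that the right-hand side equals $p(n)$ for $n\ll0$, for some $p(x)\in\Q[x]$ of degree $<c-1$.

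For (2)(b) set $N:=H^{c-1}_{R'_+}(\Ext^d_R(\tfrac{R}{\m_0R},M))$. I would argue that if $p(n)=0$ for all $n\ll0$, then $N$ would be finitely generated. Indeed, its components are finite-dimensional over $\tfrac{R_0}{\m_0}$ and vanish for $n\gg0$ by \cite[Theorem 16.1.5]{bsh}, so only finitely many components would be nonzero. Then $N$ would have finite length, hence be finitely generated, contradicting the hypothesis. So $p$ is a nonzero polynomial. Its values $p(n)$ are nonnegative dimensions for $n\ll0$, hence $p(n)>0$ for all $n\ll0$, and (2)(a) gives $\mu^{d-1}(\m_0,H^c_{R_+}(M)_n)>0$ for all $n\ll0$. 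Non-finiteness of $\mu^{d-1}(\m_0,H^c_{R_+}(M))$ then follows from (\ref{finite}). The steps needing the most care are the corner bookkeeping in the second spectral sequence (confirming that $E_2^{c-1,d}$ survives) and this positivity argument; everything else is a direct reuse of the tools from Theorem \ref{thm}.
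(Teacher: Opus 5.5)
Your proposal is correct and follows essentially the same route as the paper: collapse of the spectral sequence (\ref{s1}) to the single row $j=c$ together with Lemma \ref{2} gives (1), and for (2)(a) the corner term $E_2^{c-1,d}$ of (\ref{s2}) survives to $E_\infty$, after which the Independence Theorem and \cite[Theorem 17.1.11]{bsh} supply the polynomial. In (2)(b) your argument that $p$ cannot vanish identically is slightly more careful than the paper's direct appeal to Lemma \ref{kirby}: on its own that lemma only gives $H^{c-1}_{R_+}(\Ext^d_R(\tfrac{R}{\m_0R},M))_n\neq 0$ for infinitely many $n\to-\infty$, and the polynomial behaviour is what upgrades this to all $n\ll0$.
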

\begin{proof}
    \begin{enumerate}
        \item Consider the following homogeneous convergence of spectral sequences
\[
E_2^{i, j}= \Ext^i_{R }(\tfrac{R}{\m_0R}, H^j_{R_+}(M))\underset{i}{\Rightarrow}H^{i+ j}_{R_+}(\tfrac{R}{\m_0R}, M).\,\,\,\,\,\, (\ref{s1})\]
   
    Since $M$ is relative Cohen-Macaulay with respect to $R_+$ of degree $c$, we have 
    \[E_2^{i, j}=  \Ext^{i}_{R}(\tfrac{R}{\m_0R}, H^j_{R_+}(M))= 0,\,\,\,\,\,\,\text{for all}\,\,i, j\in \NN_0 \,\,\text{with} \,\,j\neq c.\]
    Therefore, using the concept of convergence of spectral sequences, we have the following homogeneous isomorphisms of graded $R$- modules
    \begin{align}\label{c1}
       \Ext^{i}_{R}(\tfrac{R}{\m_0R}, H^c_{R_+}(M))&= E_2^{i, c}\nonumber\\
       &\cong E_{\infty}^{i, c}\nonumber\\
       &\cong H^{i+ c }_{R_+}(\tfrac{R}{\m_0R}, M),&&\text{for all}\,\,i\in \NN_0.  
    \end{align}
     This proves $(1)$.

    \item Assume that $R_0$ is regular of dimension $d$. Then, $$  \pd_{R}(\tfrac{R}{\m_0R})= \pd_{R_0}(\tfrac{R_0}{\m_0})= d.$$ 
    \begin{enumerate}
   \item  Considering the convergence of spectral sequences \ref{s2},
 \[E_2^{i, j}= H^i_{R_+}(\Ext^j_{R }(\tfrac{R}{\m_0R},M))\underset{i}{\Rightarrow}H^{i+ j}_{R_+}(\tfrac{R}{\m_0R}, M),\]
 we have
 \[E_{\infty}^{i, j}= E_2^{i, j}= 0, \,\,\,  \text{when} \,\, i> c\,\,  \text{or} \,\, j> d.\]
 
Hence, by the convergence of spectral sequences,
 
    \[  H^{c- 1}_{R_+}(\Ext^{ d}_{R }(\tfrac{R}{\m_0R},M))= E_2^{c- 1, d}\cong  E_{\infty}^{c- 1, d}, \]
 and this module is a subquotient of $H^{d+ c- 1}_{R_+}(\tfrac{R}{\m_0R}, M)$.
Now, by \ref{c1}, for any $n\in \Z$,
 \begin{equation}\label{c2}
     \mu^{d- 1}(\m_0, H^c_{R_+}(M)_n)\geq \dim_{\tfrac{R_0}{\m_0}}(H^{c- 1}_{R_+}(\Ext^{ d}_{R }(\tfrac{R}{\m_0R}, M))_n).
 \end{equation}
 Moreover, by \cite[Theorem 17. 1. 11]{bsh}, there is a polynomial $p(x)\in \Q[x]$ of degree $< c- 1$ such that 
     \[\dim_{\tfrac{R_0}{\m_0}}(H^{c- 1}_{R_+}(\Ext^{ d}_{R }(\tfrac{R}{\m_0R},M))_n)= p(n),\,\,\,\,\,\,\,\,\text{for all}\,\,n\ll 0.\]
    
 \item Assume that $H^{c- 1}_{R_+}(\Ext^d_R(\tfrac{R}{\m_0R}, M))$ is not finitely generated. Then, using \ref{kirby}, 
    \[H^{c- 1}_{R_+}(\Ext^d_R(\tfrac{R}{\m_0R}, M))_n \neq 0,\,\,\,\,\,\ \text{for all} \,\,n\ll 0. \]
   Hence, the result follows from \ref{c2} together with \ref{finite}.
     \end{enumerate}
     \end{enumerate}
\end{proof}
 

\end{document}